\newcommand{\norm}[1]{{\|  #1\|}}
\newcommand{\Lvec}{\mathbf{L}^2(\Omega)}
\newcommand{\Lmat}{\mathbb{L}^2(\Omega)}
\newcommand{\Hvec}{\mathbf{H}^1(\Omega)}
\newcommand{\om}{\Omega}
\newcommand{\reals}{\mathbb{R}}
\newcommand{\abs} [1] {\left\vert#1\right\vert}
\newcounter{theassumption}
\newtheorem{assumption}[theassumption]{{Assumption}}
\newtheorem{proposition}{{Proposition}}[section]
\newtheorem{definition}{{Definition}}[section]
\newtheorem{lemma}{{Lemma}}[section]
\newtheorem{remark}{{Remark}}[section]
\newtheorem{theorem}{{Theorem}}[section]
\numberwithin{equation}{section}
\begin{document}
\title[Optimal control of a shear--thickening fluid]{Optimal control of a nonsmooth PDE arising in the modeling of shear--thickening fluids}
\author[J.C. De los Reyes and P. Quiloango]{Juan Carlos De los Reyes$^\ddag$ and Paola Quiloango$^\ddag$}
\address{$^\ddag$Research Center for Mathematical Modeling (MODEMAT) and Department of Mathematics, Escuela Polit\'ecnica Nacional, Quito, Ecuador}
\keywords{Optimal control, stationarity conditions, nonsmooth partial differential equations, shear thickening fluids}
\subjclass[2020]{49J20, 49K20, 90C46}
\thanks{$^*$This research has been supported by Escuela Polit\'ecnica Nacional de Ecuador under award PIMI-16-14: \emph{Modelización matemática y control de fluidos magneto y electro-reológicos}. Paola Quiloango acknowledges partial support from the \emph{Master Program in Mathematical Optimization}, Escuela Politécnica Nacional de Ecuador. Juan Carlos De los Reyes acknowledges partial support of the Weierstrass Institute for Applied Analysis and Stochastics, Berlin-Germany.}

\smallskip
\begin{abstract}
This paper focuses on the analysis of an optimal control problem governed by a nonsmooth quasilinear partial differential equation that models a stationary incompressible shear-thickening fluid. We start by studying the directional differentiability of the non-smooth term within the state equation as a prior step
to demonstrate the directional differentiability of the solution operator. Thereafter, we establish a primal first order necessary optimality condition (\emph{Bouligand (B) stationarity}), which is derived from the directional differentiability of the solution operator. By using a local regularization of the nonsmooth term and carrying out an asymptotic analysis thereafter, we rigourously derive a \emph{weak stationarity system} for local minima. By combining the B- and weak stationarity conditions, and using the regularity of the Lagrange multiplier, we are able to obtain a \emph{strong stationarity system} that includes an inequality for the scalar product between the symmetrized gradient of the state and the Lagrange multiplier.
\end{abstract}

\maketitle


\section{Introduction}
Optimal control problems of partial differential equations (PDE) involving nonsmooth nonlinear terms pose several analytical challenges as, differently from the smooth cases, there is no adjoint calculus available to derive optimality conditions in a straightforward manner \cite{troeltzsch,delosreyesbook}. To overcome these difficulties, in recent years alternative methodologies have been proposed for the study of these problems, mainly using the techniques developed for the analysis of control problems constrained by variational inequalities. In particular, the directional differentiability of the solution operator constitutes a corner stone of current approaches.

The directional differentiability of the solution operator leads, in combination with smooth cost functionals, to primal optimality conditions for the control problem, the so-called \emph{B-stationarity conditions} (see, e.g., \cite{luopang1996}). However, to prove such directional differentiability in the infinite dimensional setting, a careful study of the nonsmoothness and the regularity of the control and state variables has to be carried out.

In order to obtain optimality conditions involving also dual variables, i.e., Lagrange multipliers, additional results are required. In \cite{meyersusu2017} a regularization approach was proposed for getting existence of Lagrange multipliers for the nonsmooth optimal control problem. This smoothing technique leads only to \emph{weak stationarity conditions} in the limit. However, by combining both the B-stationarity conditions and the weak stationarity ones, \emph{strong optimality conditions} for the nonsmooth control problem may be obtained. This approach has been already studied for the control of nonsmooth semilinear \cite{meyersusu2017,christof2018} and some quasilinear equations \cite{clason2021}.

In this article, we consider a nonsmooth optimal control problem governed by an elliptic quasilinear equation that models a stationary incompressible shear-thickening fluid with homogeneous Dirichlet boundary conditions. Differently from \cite{clason2021}, the quasilinearity in our case involves the symmetrized gradient of the state variable in suitable solenoidal spaces. This fact makes the analysis more involved, as there is not as much regularity as when the nonsmooth term depends solely on the state \cite{clason2021} and, therefore, many compact embeddings are missing. By using monotonicity arguments, we are able compensate the lack of compactness and obtain approximation results for the regularized problems (Theorem \ref{th:convergenceofminimizers}), as well as a limiting weak stationarity system (Theorem \ref{th:optimalityafterlimit}). Moreover, after proving the directional differentiability of the solution operator, we are able to prove a B-stationarity result. Combining the B- and weak stationarity systems, we are able to obtain a strong stationary system (Theorem \ref{th:strongstationarity}). The main novelty of this system is the inclusion of a pointwise inequality relation for the scalar product between the Lagrange multiplier and the symmetrized gradient of the state on the active set.

Specifically, we consider the optimal control problem
\begin{subequations}\label{eq:controlproblem}
	\begin{align} 
	  & \underset{(y,u)\in Y \times U}{\text{min}}   \quad J(y,u) = \dfrac{1}{2} \,   \norm{y-z_{d}}  _{L^{2} (\Omega)^{N}}^{2}+\dfrac{\alpha}{2} \,\norm{u} _{L^{2} (\Omega)^{N}} ^{2} && \label{eq:controlproblem_1}\\
	  \noindent & \text{subject to} && \notag \\&\mu \int_{\Omega} \varepsilon y : \varepsilon v \, dx + \nu \int_{\Omega} \mathsf{m}(\varepsilon y): \varepsilon v \, dx= \int_{\Omega} u \cdot v \, dx,    && \text{for all } v \in Y, \label{eq:controlproblem_2}
	  \end{align}
\end{subequations}
where the nonsmooth function \(\mathsf{m}:\mathbb{R}^{N\times N} \rightarrow \mathbb{R}^{N\times N}\) is defined by
\begin{equation*}
    \mathsf{m}(E)=\begin{cases}
    \text{max}(0, \abs{E}-g)\dfrac{E}{\abs{E}},\quad &\text{if } E \neq 0,\\
    0, \quad &\text{if } E =0.
    \end{cases}
\end{equation*}
\(Y=\left\{ y \in H^1 (\Omega)^N : \nabla \cdot y=0 \text{ in }\Omega , y=0 \text{ on } \Gamma \right\} \) and \(U=L^2(\Omega)^N \). We consider a Lipschitz bounded domain $\Omega \subset \reals ^N$  ($N=2$ or $N=3$), with boundary $\Gamma$, and a desired state $z_d \in L^2(\om)^N$. The symmetrized gradient is denoted by $\varepsilon y$, i.e., $\varepsilon y := \frac12 (\nabla y + \nabla y^T)$,
and we introduce the symmetric tensor \(\mathsf{m}\) in order to explicitely define the nonlinearity in the state equation. Morover, $\mu >0,~\nu \geq 0$ stand for viscosity coefficients of the fluid and $g>0$ for the yield stress at which the viscosity increases. The objective functional \eqref{eq:controlproblem_1} is a tracking type functional, where \(\alpha>0\) is a parameter related to the cost of the control.

The nonlinear model \eqref{eq:controlproblem_2} is related to incompressible shear thickening fluids, which are non-Newtonian supensions characterized by an abrupt increase in viscosity for increasing shear rate. In strong form, the model can be written as
\begin{subequations}
	\begin{align}
		- \operatorname{Div} \sigma &=u && \text{ in }\Omega,\\
		\nabla \cdot y & =0 && \text{ in } \Omega,\\
		 y&=0 && \text{ on } \Gamma,
	\end{align}
\end{subequations}
where $\sigma$ stand for the total stress tensor $\sigma := 2 \mu \varepsilon y + 2 \nu \mathsf{m}(\varepsilon y) - p \mathbb I$, with $p$ denoting the pressure and $\mathbb I$ the identity matrix.

Optimal control problems for non-Newtonian materials have been previously considered in, e.g., \cite{slawig,wachsmuth,arada,guerra}, where the considered models contain strongly nonlinear differentiable terms. The model proposed in \cite{de2014nonsmooth}, and considered in this article, introduces a non-linearity for which the analysis in classical Sobolev spaces is possible, at the price of having a non-differentiable term in the equation.

\emph{The organization of the paper is as follows:} In Section \ref{s:differentiability}, we demonstrate the directional differentiability of the solution operator and characterize the derivative by means of a partial differential equation. The optimal control problem is analyzed in Section 3, where existence of a global solution is proved and a primal optimality condition, based on the directional differentiability of the solution operator, is derived. Section 4 is devoted to establishing optimality conditions using a regularization approach. The consistency of the approach is justified and a limiting optimality system is obtained, which corresponds to \emph{weak stationarity}. Finally, in Section 5 we obtain a strong stationarity condition for our problem.

\emph{Notation.} Along the paper we use the notation $\Hvec := H^1(\Omega)^N$, $\Lvec := L^2(\Omega)^N$ and $\Lmat := L^2(\Omega; \mathbb R^{N \times N})$. The notation $|\cdot|$ is used for the absolute value, the Euclidean norm or the Frobenius norm, and the proper one should be inferred from the context. The norm in a normed vector space $X$ is denoted by $\| \cdot \|_X$; the subindex is dismissed in the case of $L^2(\Omega), \Lvec$ or $\Lmat.$


\section{Directional differentiablity of the solution operator}\label{s:differentiability}
We begin by studying the differentiability properties of the solution operator associated to the state equation \eqref{eq:controlproblem_2}.

\begin{remark}
  The function \(\mathsf{m}\) corresponds to the derivative of the convex function \(D \mapsto \frac12 \max \left(0,\abs{D} -g \right)^2\), defined for all \(D \in \mathbb{R}^{N\times N}\). Therefore, \(\mathsf{m}\) is monotone, i.e.,
  \begin{equation*}\label{eq:monotone}
    \left( \mathsf{m}(E)-\mathsf{m}(D) \right):(E-D)\geq 0.
  \end{equation*}
\end{remark}

For each \(u \in \Lvec\), the existence of a unique solution \(y \in Y\) to \eqref{eq:controlproblem_2} is guaranteed by \cite[Theorem 2.1]{de2014nonsmooth}. Therefore, we can define the solution operator as follows.

\begin{definition} The solution operator \(S:U\rightarrow Y\) is defined by \(u \mapsto S(u)=y\), where \(y\) is the unique solution to (\ref{eq:controlproblem_2}) associated with \(u\).
\end{definition}

The solution operator is, in addition, Lipschitz continuous. Indeed, taking $u_1, u_2 \in \Lvec$ it follows that the difference $y_1-y_2=S(u_1)-S(u_2)$ fulfills the equation:
\begin{equation*}\label{eq:convergenceofyn}
    \mu \int_{\Omega} \varepsilon (y_1-y_2):  \varepsilon v +\nu \int_{\Omega} (\mathsf{m}(\varepsilon y_1)-\mathsf{m}(\varepsilon y_2)):\varepsilon v=\langle u_1 -u_2, v \rangle_{Y',Y}, \quad \forall v \in Y.
\end{equation*}
Testing with $v=y_1-y_2$, and taking into account the monotonicity of \(\mathsf{m}\) and the Cauchy Schwarz inequality, it follows that
\begin{equation}\label{eq:monotonecauchy}
    \mu \norm{\varepsilon(y_1-y_2)}^2 \leq \norm{u_1 -u_2}_{Y'}\norm{y_1-y_2}_Y .
\end{equation}
Using Korn's inequality \cite[Thm.~6.15-1]{Ciarlet} in \eqref{eq:monotonecauchy}, we then obtain the existence of a constant \(C_K>0\) such that
\begin{equation}\label{eq:solopislipschitz}
    \norm{S(u_1)-S(u_2)}_Y \leq \dfrac{1}{\mu C_K}  \norm{u_1 -u_2}_{Y'}.
\end{equation}

It is also known from \cite{de2014nonsmooth} that, since $u \in \Lvec$, the solution has the additional regularity $y \in H^2_{loc}(\Omega)^N$. Moreover, if $N=2$ then the shear strain rate tensor satisfies the H\"older regularity $\varepsilon y \in C^{0,\alpha}(\Omega;\mathbb R^{N \times N})$, for $0<\alpha<1$.


\subsection{Directional differentiablity of the nonsmooth term}
In order to obtain the directional derivative of the Nemytskii operator associated with \(\mathsf{m}\), as a function from \(\Lmat\) to \(\Lmat\),
we will make use of the directional derivative of \(\mathsf{m}\) as a function from $\mathbb{R}^{N\times N}$ to $\mathbb{R}^{N\times N}$.

An equivalent way of writing the nonlinearity
\(\mathsf{m}:\mathbb{R}^{N\times N}  \rightarrow \mathbb{R}^{N\times N} \) is given by:
\begin{equation*}\label{eq:nonlinearity}
    \mathsf{m}(E)=\begin{cases}
        E -g \dfrac{E}{\abs{E}}, \quad &\text{if } \abs{E}> g\\
        0, \quad & \text{if }\abs{E}\leq g.
    \end{cases}
\end{equation*}
First, note that the function \(\mathsf{m}\) is continuous. In order to compute the directional derivative of \(\mathsf{m}\) in \(E \in \mathbb{R}^{N \times N}\), we consider three cases: \emph{i)} \(\abs{E}<g\), \emph{ii)} \(\abs{E}>g\) and \emph{iii)} \(\abs{E}=g\).

\textbf{Case 1.} Let \(E \in \mathbb{R}^{N \times N}\) be such that \(\abs{E}<g\).
Since the set \(\{E \in \mathbb{R}^{N \times N}:\abs{E}<g   \} \) is open, for all \(H \in \mathbb{R}^{N \times N} \) we have \[\abs{E +tH}<g, \]for \(t>0\) sufficiently small. Then, according to the definition of \(\mathsf{m}\), \(\mathsf{m}(E+tH)=0\) and \(\mathsf{m}(E)=0\).
Replacing these values in the definition of the directional derivative of \(\mathsf{m}\), we obtain
\begin{equation*}
  \underset{t\rightarrow 0}{\text{lim}} \dfrac{\mathsf{m}(E+tH)-\mathsf{m}(E)}{t}=0.
\end{equation*}
Thus,
\begin{equation*}
  \mathsf{m}'(E;H)=0, \quad \forall H \in \mathbb{R}^{N \times N}.
\end{equation*}

\textbf{Case 2.} Let \(E \in \mathbb{R}^{N \times N}\) be such that \(\abs{E}>g\).
Since the set \(\{E \in \mathbb{R}^{N \times N}:\abs{E}>g   \} \) is open as well, for all \(H \in \mathbb{R}^{N \times N} \) we have \[\abs{E +tH}>g, \]for \(t>0\) suficiently small. Replacing the corresponding values in the definition of the directional derivative of \(\mathsf{m}\), we get:
\begin{equation}\label{eq:directionalderm}
  \underset{t\rightarrow 0}{\text{lim}} \dfrac{\mathsf{m}(E+tH)-\mathsf{m}(E)}{t}=\underset{t\rightarrow 0}{\text{lim}}\dfrac{1}{t}\left\{tH-g\dfrac{E+tH}{\abs{E+tH}}+g\dfrac{E}{\abs{E}}\right\}=H-g\mathsf{F}'(E;
  H).
\end{equation}
where \(\mathsf{F}'(E;H)\) is the derivative of the function \(E \mapsto \mathsf{F}(E)=\frac{E}{\abs{E}}\) at \(E \in \mathbb{R}^{N\times N}\) in the direction \(H \in \mathbb{R}^{N\times N}\). Notice that \(\mathsf{F}\) is well-defined since \(\abs{E} >g\) and

\begin{equation} \label{eq:derivativeofF}
  \mathsf{F}'(E;H)=\dfrac{H}{\abs{E}}-\dfrac{E}{\abs{E}^3} (E:H).
\end{equation}
Inserting \eqref{eq:derivativeofF} in (\ref{eq:directionalderm}), we obtain
\begin{equation*}
  \mathsf{m}'(E;H)=H-g\left(\dfrac{H}{\abs{E}} -\dfrac{E}{\abs{E}^3} (E:H)\right), \quad \forall H\in \mathbb{R}^{N \times N}.
\end{equation*}

\textbf{Case 3.} Let \(E \in \mathbb{R}^{N \times N}\) be such that \(\abs{E}=g\). In this case,  \(\mathsf{m}(E)=0\) and depending on the direction \(H \in \mathbb{R}^{N\times N}\), for \(t>0\) sufficiently small, \(\mathsf{m}(E +tH)\) takes different values.
If \(H \in \mathbb{R}^{N\times N} \) is such that \(E: H \geq 0\), we have that \(\abs{E +tH}>g\) for \(t>0\) sufficiently small, and then
\begin{align*}
    \lim_{t\rightarrow 0} \dfrac{\mathsf{m}(E +tH)-\mathsf{m}(E )}{t}
    &=\lim_{t\rightarrow 0} \dfrac{E+tH}{\abs{E +tH}}\left(\dfrac{\abs{E +tH} -\abs{E}}{t} \right)=\dfrac{E}{\abs{E}^2}E:H.
\end{align*}

On the other hand, if \(H\) is such that \(E: H < 0\), for \(t>0\) sufficiently small, we have \(\abs{E+tH} < g\). In this case, we obtain
\begin{equation*}
  \lim_{t\rightarrow 0} \dfrac{\mathsf{m}(E +tH)-\mathsf{m}(E )}{t}=0.
\end{equation*}
In conclusion, if \(\abs{E}=g\), then we have
\begin{equation*}
  \mathsf{m}'(E;H)=
    \begin{cases}
    \dfrac{E}{g^2}(E:H), \quad & \text{if } E: H \geq 0,\\
    0,\quad &\text{if }E: H < 0.
    \end{cases}
\end{equation*}

In summary, the directional derivative of \(\mathsf{m}\) at \(E\) in direction \(H\) is given by
\begin{equation*}
  \mathsf{m}' (E;H)= \mathbbm{1}_{\{\abs{E}>g\} } \left( H+ \dfrac{gE}{\abs{E}^3} (E:H)-\dfrac{g H}{\abs{E}} \right) +\mathbbm{1}_{\{\abs{E}=g\} }\text{max}(0, (E:H))\dfrac{E}{g^2}.
\end{equation*}

\begin{theorem}
  Let \(G: L^2 \left( \Omega;\mathbb{R}^{N\times N}\right) \to L^2 \left( \Omega;\mathbb{R}^{N\times N}\right)\) be the Nemytskii operator associated with the nonlinearity \(\mathsf{m}\), i.e., for \(z \in L^2 \left( \Omega;\mathbb{R}^{N\times N}\right)\), \(G(z)(x):=\mathsf{m}(z(x))\), f.a.a. \(x \in \mathbb{R}^N\). Then \(G\) is directionally differentiable from \(L^2 \left( \Omega;\mathbb{R}^{N\times N}\right)\) to \(L^2 \left( \Omega;\mathbb{R}^{N\times N}\right)\).
\end{theorem}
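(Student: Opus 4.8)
The plan is to show that the pointwise directional derivative $\mathsf{m}'(\cdot\,;\cdot)$ computed above lifts to the Nemytskii operator level via a dominated convergence argument, the candidate derivative being the superposition operator
\[
  G'(z;h)(x) := \mathsf{m}'(z(x);h(x)), \quad \text{f.a.a. } x \in \Omega.
\]
Fixing \(z,h \in \Lmat\), I would consider for \(t>0\) the difference quotient \(q_t(x) := t^{-1}\bigl(\mathsf{m}(z(x)+t\,h(x))-\mathsf{m}(z(x))\bigr)\) and aim to prove \(q_t \to G'(z;h)\) in \(\Lmat\) as \(t \to 0^+\), which is precisely the asserted directional differentiability.

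Three ingredients are needed. First, measurability and square-integrability of the candidate: from the explicit formula for \(\mathsf{m}'\), each branch is controlled by \(\abs{H}\). Indeed, on \(\{\abs{E}>g\}\) the estimates \(\abs{E:H}\leq \abs{E}\abs{H}\) and \(g/\abs{E}\leq 1\) give \(\abs{\mathsf{m}'(E;H)}\leq 3\abs{H}\), while on \(\{\abs{E}=g\}\) one has \(\abs{\mathsf{m}'(E;H)} = \abs{E}^2 g^{-2}\,\mathrm{max}(0,\,\tfrac{E:H}{\abs{E}\abs{H}})\abs{H} \leq \abs{H}\); hence \(\abs{\mathsf{m}'(E;H)}\leq C\abs{H}\) for a fixed \(C\), so \(\abs{G'(z;h)(x)}\leq C\abs{h(x)}\) and \(G'(z;h)\in\Lmat\). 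Measurability follows since \(\mathsf{m}'\) is a Borel function of \((E,H)\). Second, pointwise convergence: applying the pointwise directional differentiability of \(\mathsf{m}\) (Cases 1--3 above) with \(E=z(x)\) and \(H=h(x)\) yields \(q_t(x)\to \mathsf{m}'(z(x);h(x))\) as \(t\to 0^+\), for a.e. \(x\in\Omega\). Third, a uniform \(L^2\)-dominating function.

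For the domination, the key observation is that \(\mathsf{m}\) is globally nonexpansive. Writing \(\mathsf{m}(E)=E-P(E)\), where \(P\) denotes the orthogonal projection onto the closed ball \(\{D\in\mathbb{R}^{N\times N}:\abs{D}\leq g\}\), both \(P\) and \(I-P\) are \(1\)-Lipschitz, so \(\abs{\mathsf{m}(E)-\mathsf{m}(D)}\leq \abs{E-D}\). Consequently \(\abs{q_t(x)}\leq \abs{h(x)}\) uniformly in \(t\), and therefore
\[
  \abs{q_t(x)-G'(z;h)(x)}^2 \leq 2\abs{q_t(x)}^2 + 2\abs{G'(z;h)(x)}^2 \leq 2(1+C^2)\,\abs{h(x)}^2 \in L^1(\Omega).
\]
The dominated convergence theorem then gives \(\norm{q_t-G'(z;h)}^2_{\Lmat}\to 0\), completing the proof.

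The main obstacle is the passage from the pointwise limit to the \(\Lmat\) limit, and the decisive tool is the global nonexpansiveness of \(\mathsf{m}\), which supplies the \(t\)-independent dominating function \(\abs{h}\in L^2\) and thereby circumvents the lack of compactness. A secondary point requiring care is the critical set \(\{x:\abs{z(x)}=g\}\): there \(\mathsf{m}'(z(x);\cdot)\) is only positively homogeneous and is discontinuous in the direction, but since the pointwise directional derivative of \(\mathsf{m}\) was established at \emph{every} \(E\) (Case 3) and the domination holds uniformly, the argument proceeds on this set as well, without any additional regularity assumption on \(z\) or \(h\).
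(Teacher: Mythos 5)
Your proof is correct and follows essentially the same route as the paper's: pointwise directional differentiability of \(\mathsf{m}\), a \(t\)-uniform dominating function proportional to \(\abs{h}\), and the Lebesgue dominated convergence theorem. The only difference is cosmetic: where the paper cites \cite{de2014nonsmooth} for the Lipschitz continuity of \(\mathsf{m}\) and \cite{luopang1996} for the Lipschitz bound on \(\mathsf{m}'\) with respect to the direction, you derive both facts directly (via the projection decomposition \(\mathsf{m}=I-P\) onto the ball \(\{\abs{D}\leq g\}\) and explicit estimates on each branch of \(\mathsf{m}'\)), which makes the argument self-contained.
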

\begin{proof}
  According to the definition of \(G\), for arbitrary \(x \in \mathbb{R}^N\) and \(z \in L^2 \left(\Omega;\mathbb{R}^{N\times N} \right) \), we have
  \begin{equation*}
   \underset{t\rightarrow 0}{\text{lim}}   \dfrac{G(z+th)(x)-G(z)( x)}{t}  =\underset{t\rightarrow 0}{\text{lim}}   \dfrac{\mathsf{m}(z (x)+ t h (x))-\mathsf{m}(z (x))}{t}.
  \end{equation*}
  Since \(\mathsf{m}\) is directionally differentiable, we have:
  \begin{equation*}
      \underset{t\rightarrow 0}{\text{lim}}   \dfrac{\mathsf{m}(z (x)+ t h (x))-\mathsf{m}(z (x))}{t}=\mathsf{m}'(z(x);h(x)), \quad \text{f.a.a. }x \in \Omega.
  \end{equation*}

  Moreover, since \(\mathsf{m}\) is Lipschitz continuous \cite[Section 2.2]{de2014nonsmooth} and  the directional derivative of \(\mathsf{m}'\) is globally Lipschitz continuous with respect to the direction \cite[Proposition 4.2.2]{luopang1996}, we have
  \begin{multline*}
      \abs{\dfrac{\mathsf{m}(z (x)+ t h (x))-\mathsf{m}(z (x))}{t}-\mathsf{m}'(z(x);h(x))}\\
      \leq \dfrac{\abs{z(x)+th(x)-z(x)}}{t}+\abs{h(x)}
      \leq 2\abs{h(x)},
  \end{multline*}
  a.e. in \(\Omega\).
  Hence, by Lebesgue Dominated Convergence Theorem, we obtain
  \begin{equation}\label{eq:nemytskiidifferentiable}
    \lim_{t\rightarrow 0} \left\|\dfrac{G(z+th)-G(z)}{t} -G'(z,h)\right\|=0,
  \end{equation}
  with
  \(G'(z;h)(x)=\mathsf{m}'(z(x);h(x))\) a.e. in \(\Omega\).
\end{proof}

\subsection{Differentiability of the solution operator}
In order to study the directional differentiablity of the solution operator \(S\), we will directly analyze the limit of the quotient \[\dfrac{S(u+th)-S(u)}{t}, \text{ for a given }h \in U. \]
To do so, in the following proposition we start by establishing the existence and uniqueness of solutions for an auxiliary partial differential equation, whose solution will act thereafter as candidate for the directional derivative of the solution operator.
\begin{theorem}\label{th:linearizedunique}
  Let \(y \in Y\) be a solution for equation \eqref{eq:controlproblem_2}. For every \(h \in \Lvec\),  the equation
  \begin{equation}\label{eq:linearizedeq}
    \mu \int_{\Omega} \varepsilon z : \varepsilon v  +\nu\int_{\Omega}   \mathsf{m}'\left( \varepsilon y;\varepsilon z \right)   : \varepsilon v = \int_{\Omega} h \cdot v,    \quad \forall v \in Y,
  \end{equation}
  has a unique solution \(z \in Y\). Here, \(\mathsf{m}'\left( \varepsilon y;\varepsilon z \right)\) denotes the directional derivative of the Nemytskii operator associated with the function \(\mathsf{m}\), at \(\varepsilon y\) in the direction \(\varepsilon z\).
\end{theorem}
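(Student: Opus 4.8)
The plan is to recast \eqref{eq:linearizedeq} as a single operator equation in $Y'$ and to solve it by monotone operator theory. I would define $A:Y\to Y'$ by
\[
  \langle A z, v\rangle_{Y',Y} := \mu \int_{\Omega} \varepsilon z : \varepsilon v + \nu \int_{\Omega} \mathsf{m}'(\varepsilon y; \varepsilon z) : \varepsilon v, \qquad v \in Y,
\]
so that $z\in Y$ solves \eqref{eq:linearizedeq} if and only if $\langle Az,v\rangle_{Y',Y}=\int_\Omega h\cdot v$ for all $v\in Y$. First I would check that $A$ is well defined, that is, that $z\mapsto \mathsf{m}'(\varepsilon y;\varepsilon z)$ maps $Y$ into $\Lmat$. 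Since $\varepsilon y$ is fixed and measurable, the sets $\{\abs{\varepsilon y}>g\}$, $\{\abs{\varepsilon y}=g\}$, $\{\abs{\varepsilon y}<g\}$ are measurable, and on each the explicit formula for $\mathsf{m}'$ is continuous in the direction; hence $x\mapsto \mathsf{m}'(\varepsilon y(x);\varepsilon z(x))$ is measurable. The pointwise bound $\abs{\mathsf{m}'(E;H)}\le \abs{H}$ (already used in the proof of the previous theorem) then gives $\norm{\mathsf{m}'(\varepsilon y;\varepsilon z)}\le \norm{\varepsilon z}$, so the image lies in $\Lmat$ and $A$ is bounded.

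The two properties that make the machinery applicable are strong monotonicity and Lipschitz continuity of $A$, and the crucial ingredient is that the direction map $H\mapsto \mathsf{m}'(E;H)$ is itself monotone. I would derive this directly from the monotonicity of $\mathsf{m}$ (the Remark): applying that inequality at $E+tH_1$ and $E+tH_2$ gives $t\,\bigl[(\mathsf{m}(E+tH_1)-\mathsf{m}(E+tH_2)):(H_1-H_2)\bigr]\ge 0$, and dividing by $t^2>0$ and letting $t\downarrow 0$ yields $(\mathsf{m}'(E;H_1)-\mathsf{m}'(E;H_2)):(H_1-H_2)\ge 0$. Consequently, for $z_1,z_2\in Y$,
\[
  \langle Az_1-Az_2, z_1-z_2\rangle_{Y',Y} = \mu\,\norm{\varepsilon(z_1-z_2)}^2 + \nu\int_\Omega \bigl(\mathsf{m}'(\varepsilon y;\varepsilon z_1)-\mathsf{m}'(\varepsilon y;\varepsilon z_2)\bigr):\varepsilon(z_1-z_2) \ge \mu\,\norm{\varepsilon(z_1-z_2)}^2,
\]
and Korn's inequality upgrades the right-hand side to $\mu C_K\norm{z_1-z_2}_Y^2$, giving strong monotonicity; taking $z_2=0$ and using $\mathsf{m}'(E;0)=0$ yields coercivity. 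For Lipschitz continuity I would note that each branch of $H\mapsto \mathsf{m}'(E;H)$ is globally Lipschitz with constant independent of $E$ (linear with operator norm $\le 3$ on $\{\abs{E}>g\}$, $1$-Lipschitz through the $\max$ on $\{\abs{E}=g\}$, and zero on $\{\abs{E}<g\}$), so the Nemytskii operator $z\mapsto \mathsf{m}'(\varepsilon y;\varepsilon z)$ is Lipschitz from $\Lmat$ to $\Lmat$ and hence $A$ is Lipschitz, bounded and hemicontinuous from $Y$ to $Y'$.

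With these ingredients, existence and uniqueness follow from the main theorem on strongly monotone, Lipschitz continuous operators on a Hilbert space (Browder--Minty/Zarantonello): $A$ is a bijection from $Y$ onto $Y'$. Since $h\in\Lvec\hookrightarrow Y'$ through the Gelfand triple $Y\hookrightarrow\Lvec\hookrightarrow Y'$, the functional $v\mapsto\int_\Omega h\cdot v$ lies in $Y'$ and therefore has a unique preimage $z\in Y$, which is precisely the sought solution of \eqref{eq:linearizedeq}.

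The main obstacle, and the reason Lax--Milgram cannot be invoked directly, is that $\mathsf{m}'(\varepsilon y;\cdot)$ fails to be linear on the critical set $\{\abs{\varepsilon y}=g\}$, where it carries the positively homogeneous but genuinely nonlinear term $\max(0,\varepsilon y:\varepsilon z)\,\varepsilon y/g^2$. This forces the use of monotone operator theory in place of a bilinear-form argument, and it is exactly the monotonicity of this nonlinear branch, inherited from the monotonicity of $\mathsf{m}$, that keeps the argument intact; verifying measurability of the piecewise-defined Nemytskii operator and the uniform Lipschitz bound across the three regimes is the other point requiring care.
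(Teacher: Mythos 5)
Your proposal is correct, and it follows the same overall skeleton as the paper: both recast \eqref{eq:linearizedeq} as an operator equation for the same operator (the paper calls it \(T\), you call it \(A\)) and solve it by the Minty--Browder theorem after checking monotonicity, coercivity, and a continuity property. The difference lies in how the hypotheses are verified, and your route is genuinely distinct at two points. First, for monotonicity of the nonsmooth term, the paper argues case by case on the sets \(\{\abs{\varepsilon y}<g\}\), \(\{\abs{\varepsilon y}>g\}\), \(\{\abs{\varepsilon y}=g\}\): it performs an explicit algebraic computation on \(\{\abs{\varepsilon y}>g\}\) (the inequality \eqref{eq:casemonotonicity1}) and on the critical set \(\{\abs{\varepsilon y}=g\}\) merely asserts that monotonicity of \(\mathsf{m}\) makes the claim ``easy to verify.'' Your difference-quotient argument --- applying the monotonicity of \(\mathsf{m}\) at \(E+tH_1\), \(E+tH_2\), dividing by \(t^2\), and passing to the limit --- proves \((\mathsf{m}'(E;H_1)-\mathsf{m}'(E;H_2)):(H_1-H_2)\ge 0\) for \emph{all} \(E\) in one stroke, which is both shorter and fills in precisely the step the paper leaves informal. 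Second, where the paper establishes only hemicontinuity of \(T\), via a three-case Lebesgue dominated convergence argument that occupies most of its proof, you prove the stronger property that \(A\) is globally Lipschitz from \(Y\) to \(Y'\), using the uniform (in \(E\)) Lipschitz bound of \(H\mapsto\mathsf{m}'(E;H)\); hemicontinuity then comes for free, and you additionally get to quote the strongly-monotone-plus-Lipschitz bijectivity theorem directly. Your version also addresses measurability of the piecewise-defined integrand, which the paper passes over. The paper's case analysis buys explicit pointwise formulas on each set (which it reuses later, e.g.\ in the weak and strong stationarity systems), while your argument buys brevity and rigor on the critical set; both are complete for the statement at hand.
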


\begin{proof}
  We will show that for every \(l \in Y'\), equation \(T(z)=l\), where  \(T:Y\rightarrow Y'\) is the operator defined by
  \[\langle T(z),v \rangle_{Y',Y}:=\mu \left( \varepsilon z,\varepsilon v \right) +\nu \left( \mathsf{m}'(\varepsilon y; \varepsilon z) , \varepsilon v\right), \forall v \in Y,\]
  has a unique solution. To this end, we will verify next that the operator \(T\) is hemicontinuous, monotone and coercive.

  First, we show that \(T\) is hemicontinuous. Let \((t_n)_{n \in \mathbb{N}}\) be a convergent sequence in \([0,1]\) with limit \(t \in [0,1]\). In the first place, if \(n \rightarrow \infty\), we have
  \begin{equation} \label{eq:hemicont1}
    \underset{n\rightarrow \infty}{\text{lim}}  \int_{\Omega} \varepsilon (z+t_n w): \varepsilon v = \int_{\Omega} \varepsilon (z+t w): \varepsilon v.
  \end{equation}

  Now, if \(\abs{\varepsilon y}< g\), \(\mathsf{m}'(\varepsilon y; \varepsilon (z+t_n w))\)=0, for every \(n \in \mathbb{N}\) and then \[\mathsf{m}'(\varepsilon y; \varepsilon (z+t_n w)) \rightarrow \mathsf{m}'(\varepsilon y; \varepsilon (z+t w)) =0 \quad \text{a.e. in } \Omega.\]
  By the Lebesgue Dominated Convergence Theorem, we obtain
  \begin{equation} \label{eq:hemicont2}
      \underset{n \rightarrow \infty }{\text{lim}} \int_{\Omega}\mathbbm{1}_{\{\abs{\varepsilon y }<g \}} \mathsf{m}'(\varepsilon y; \varepsilon (z+t_n w)) :\varepsilon v = \int_{\Omega}\mathbbm{1}_{\{\abs{\varepsilon y }<g \}}\mathsf{m}'(\varepsilon y;\varepsilon (z +tw)): \varepsilon v.
  \end{equation}
  If \(\abs{\varepsilon y}>g\), then \(\mathsf{m}'(\varepsilon y;\varepsilon (z +t_n w))=\varepsilon (z +t_n w)-g\left( -\frac{\varepsilon y}{\abs{\varepsilon y}^3} \varepsilon y:\varepsilon (z +t_n w)+\frac{\varepsilon (z +t_n w)}{\abs{\varepsilon y}}\right)\), for every \(n \in \mathbb{N}\). Hence, if \(n \rightarrow \infty\), we have that
  \begin{equation*}
      \mathsf{m}'(\varepsilon y;\varepsilon (z +t_n w)) \rightarrow \varepsilon (z +t w)-g\left( -\dfrac{\varepsilon y}{\abs{\varepsilon y}^3} \varepsilon y:\varepsilon (z +t w)+\dfrac{\varepsilon (z +t w)}{\abs{\varepsilon y}}\right)
  \end{equation*}
  a.e. in \(\Omega\). Moreover,
  \begin{multline*}
      \abs{\varepsilon (z +t_n w):\varepsilon v+g \dfrac{\varepsilon y:  \varepsilon v}{\abs{\varepsilon y}^3} \varepsilon y:\varepsilon (z +t_n w)-g\dfrac{\varepsilon (z +t_n w): \varepsilon v}{\abs{\varepsilon y}} }\leq \abs{\varepsilon (z+t_n w): \varepsilon v}\\+\dfrac{g}{\abs{\varepsilon y}^3 }\abs{\varepsilon y: \varepsilon v}\abs{\varepsilon y: \varepsilon (z+t_n w)}+\dfrac{g}{\abs{\varepsilon y}}\abs{\varepsilon(z+t_n w): \varepsilon v}
  \end{multline*}
  Therefore,
  \begin{equation*}
    \begin{aligned}
      \abs{\mathsf{m}'(\varepsilon y;\varepsilon (z +t_n w)): \varepsilon v} &\leq \left( \abs{\varepsilon z}+\abs{\varepsilon w} \right) \abs{\varepsilon v}+\dfrac{g \abs{\varepsilon y}^2}{\abs{\varepsilon y}^3} \abs{\varepsilon v}(\abs{\varepsilon z}+\abs{\varepsilon w}) +(\abs{\varepsilon z}+\abs{\varepsilon w})\abs{\varepsilon v}\\
      & \leq 3 (\abs{\varepsilon z}+\abs{\varepsilon w})\abs{\varepsilon v},
    \end{aligned}
  \end{equation*}
  and by the Lebesgue Dominated Converge Theorem, we get
  \begin{equation} \label{eq:hemicont3}
    \underset{n \rightarrow \infty }{\text{lim}} \int_{\Omega}\mathbbm{1}_{\{\abs{\varepsilon y }>g \}} \mathsf{m}'(\varepsilon y; \varepsilon (z+t_n w)) :\varepsilon v = \int_{\Omega}\mathbbm{1}_{\{\abs{\varepsilon y }>g \}}\mathsf{m}'(\varepsilon y;\varepsilon (z +tw)): \varepsilon v.
  \end{equation}
  Now, if \(\abs{\varepsilon y}=g\), for every \(n \in \mathbb{N}\)
  \begin{equation*}
  \mathsf{m}'(\varepsilon y;\varepsilon(z+t_n w))=
   \begin{cases}
   \dfrac{\varepsilon y}{g^2}\varepsilon y:\varepsilon(z+t_n w), \quad & \text{if }\varepsilon(z+t_n w):\varepsilon y \geq 0,\\
   0,\quad &\text{if }\varepsilon(z+t_n w):\varepsilon y<0.
   \end{cases}
  \end{equation*}
  Since, \(\varepsilon(z+t_n w):\varepsilon y \rightarrow \varepsilon(z+t w):\varepsilon y \) a.e. in $\Omega$, as \(n \rightarrow \infty\), we have,
  \begin{equation}
      \mathsf{m}'(\varepsilon y;\varepsilon(z+t_n w)) \rightarrow \mathsf{m}'(\varepsilon y;\varepsilon(z+t w)),
  \end{equation}
  a.e. in \(\Omega\), as \(n \rightarrow \infty\).
  If \(\varepsilon(z+t_n w): \varepsilon y <0\), we have that
  \begin{equation}\label{eq:boundedcase2}
    \abs{\mathsf{m}'(\varepsilon y;\varepsilon(z+t_n w)):\varepsilon v}\leq 0,
  \end{equation}
  for every \(n \in \mathbb{N}\).
  Also, if \(\varepsilon(z+t_n w):\varepsilon y \geq 0\), we have
  \begin{equation}\label{eq:boundedcase1}
    \begin{aligned}
      \abs{\mathsf{m}'(\varepsilon y;\varepsilon(z+t_n w)):\varepsilon v}&= \abs{\frac{\varepsilon y:\varepsilon v}{g^2}\varepsilon y:\varepsilon(z+t_n w)}\\
      &\leq \dfrac{\abs{\varepsilon y}^2}{g^2}\abs{\varepsilon v}(\abs{\varepsilon z}+t_n \abs{\varepsilon w} )\\
      &\leq \abs{\varepsilon v}(\abs{\varepsilon z}+ \abs{\varepsilon w} ).
    \end{aligned}
  \end{equation}
  for every \(n \in \mathbb{N}\).  According to (\ref{eq:boundedcase1}) and (\ref{eq:boundedcase2}), the function \(\abs{\mathsf{m}'(\varepsilon y;\varepsilon(z+t_n w))}\) is bounded in \(L^2 (\Omega)\), for every \(n \in \mathbb{N} \). Thus, thanks to the  Lebesgue Dominated Convergence Theorem, we conclude \begin{equation} \label{eq:hemicont4}
  \underset{n \rightarrow \infty }{\text{lim}} \int_{\Omega}\mathbbm{1}_{\{\abs{\varepsilon y }=g \}} \mathsf{m}'(\varepsilon y; \varepsilon (z+t_n w)) :\varepsilon v = \int_{\Omega}\mathbbm{1}_{\{\abs{\varepsilon y }=g \}}\mathsf{m}'(\varepsilon y;\varepsilon (z +tw)): \varepsilon v.
  \end{equation}
  Equations (\ref{eq:hemicont1}), (\ref{eq:hemicont2}), (\ref{eq:hemicont3}) and (\ref{eq:hemicont4}) imply that for every \(v,z,w \in Y\)
  \begin{equation*}
    \langle T(z+t_n w), v\rangle_{Y',Y}\rightarrow \langle T(z+t w), v\rangle_{Y',Y},
  \end{equation*}
  if \(n \rightarrow \infty\). Thus, the operator \(T\) is hemicontinuous.\\

  Nex, we will show that \(T\) is monotone. Since for any $z, y \in Y$,
  \begin{multline*}
    \langle T(z)-T(w),z-w\rangle_{Y',Y}=\mu \left( \varepsilon (z-w),\varepsilon (z-w) \right)\\
    +\nu \left( \mathsf{m}'(\varepsilon y; \varepsilon z) -\mathsf{m}'(\varepsilon y; \varepsilon w) , \varepsilon (z-w)\right),\quad \forall z,w \in Y,
  \end{multline*}
  we will focus on the second term on the right hand side, as for the first one the conclusion is straightforward. Let \(z,w \in Y\) be arbitrary, then
  \begin{multline*}
    \nu \left( \mathsf{m}'(\varepsilon y; \varepsilon z) -\mathsf{m}'(\varepsilon y; \varepsilon w) , \varepsilon (z-w)\right)=\nu \int_{\Omega} \mathbbm{1}_{ \{\abs{\varepsilon y}>g\} }\left[  \dfrac{g}{\abs{\varepsilon y}}\left(-\varepsilon(z-w):\varepsilon(z-w)\dfrac{}{}\right.\right.\\
    \left.
    \left.\dfrac{}{} +\dfrac{(\varepsilon y:\varepsilon(z-w))^2 }{\abs{\varepsilon y}^2}\right)+\varepsilon(z-w):\varepsilon(z-w)\right]+\nu \int_{\Omega}\mathbbm{1}_{ \{\abs{\varepsilon y}=g\} } (\mathsf{m}' (\varepsilon y;\varepsilon z)-\mathsf{m}'(\varepsilon y;\varepsilon w)):\varepsilon(z-w).
  \end{multline*}
  From the expression of $\mathsf m'$, it is easy to verify that
  \begin{equation}\label{eq:monotonyofM}
    (\mathsf{m}'(\varepsilon y;\varepsilon z)-\mathsf{m}'(\varepsilon y;\varepsilon w)):\varepsilon(z-w)\geq 0 \qquad \text{if } |\varepsilon y|=g.
  \end{equation}
  Now, if \(\abs{\varepsilon y}>g\), we have that \(1-\frac{g}{\abs{\varepsilon y}}>0\) and, consequently, \begin{equation}\label{eq:casemonotonicity1}
    \left(1-\dfrac{g}{\abs{ \varepsilon y}} \right) \varepsilon(z-w): \varepsilon(z-w)+\dfrac{(\varepsilon y: \varepsilon(z-w))^2}{\abs{\varepsilon y}^2}\geq 0
  \end{equation}
  Thanks to (\ref{eq:monotonyofM}) and (\ref{eq:casemonotonicity1}), we have that for every \(z,w \in Y\):
  \begin{equation}
    \left( \mathsf{m}'(\varepsilon y; \varepsilon z) -\mathsf{m}'(\varepsilon y; \varepsilon w) , \varepsilon (z-w)\right)\geq 0.
  \end{equation}
  Consequently, for every \(z,w \in Y\)
  \begin{align}
      \langle T(z)-T(w), z-w \rangle_{Y',Y}&\geq \mu \int_{\Omega}\varepsilon (z-w):\varepsilon (z-w) \geq \mu C_K \norm{z-w}_Y^2, \label{eq:monotonicityofT}
  \end{align}
  where \(C_K\) is a positive constant provided by the Korn's inequality. From (\ref{eq:monotonicityofT}), we conclude that \(T\) is a monotone operator. Moreover, choosing \(z\neq w\) in the same equation, we have that \(T\) is a strictly monotone operator.

  Further, note that in equation (\ref{eq:monotonicityofT}), for the particular case \(w=0\), we obtain that for every \(z \in Y\)
  \begin{equation*}
      \langle T(z) ,z \rangle_{Y',Y}\geq \mu C_K \norm{z}_Y^2.
  \end{equation*}
  Thanks to the Minty-Browder theorem, equation
  \begin{equation}\label{eq:equationforT}
    \langle T(z) ,v \rangle_{Y',Y} =  \langle l,v \rangle_{Y',Y},\quad \forall v \in Y,
  \end{equation}
  has then a unique solution \(z \in Y\). This holds, in particular, if \(l \in Y'\) is defined by
  \begin{equation*}
      \langle l,v\rangle_{Y',Y}=\int_\Omega h\cdot v \, dx, \quad \forall v \in Y,
  \end{equation*}
  with \(h\in \Lvec \).
  \end{proof}

\begin{theorem}\label{th:solutionopdirdiff}
  The solution operator \(S:U\rightarrow Y\) associated with the state equation (\ref{eq:controlproblem_2}) is directionally differentiable, and its directional derivative \(\delta=S'(u;h) \in Y\) at \(u \in U\), in the direction \(h \in U\), is given by the unique solution to
  \begin{equation}\label{eq:linearized}
    \mu \int_{\Omega} \varepsilon z : \varepsilon v +\nu\int_{\Omega}   \mathsf{m}'\left( \varepsilon y;\varepsilon z \right)   : \varepsilon v = \int_{\Omega} h \cdot v,    \quad \forall v \in Y,
  \end{equation}
  where \(y=S(u)\).
\end{theorem}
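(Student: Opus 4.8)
The plan is to study the difference quotient $\delta_t := t^{-1}\bigl(S(u+th)-S(u)\bigr)$ as $t\downarrow 0$ and to identify its limit with the unique solution $z$ of \eqref{eq:linearized}, whose existence is guaranteed by Theorem \ref{th:linearizedunique}. Writing $y=S(u)$ and $y_t=S(u+th)$, I subtract the two state equations \eqref{eq:stateequation} and divide by $t$ to obtain that $\delta_t$ solves
\[
\mu(\varepsilon\delta_t,\varepsilon v)+\nu(\chi_t,\varepsilon v)=(h,v),\quad\forall v\in Y,\qquad \chi_t:=\frac{\mathsf{m}(\varepsilon y_t)-\mathsf{m}(\varepsilon y)}{t}.
\]
The Lipschitz estimate \eqref{eq:solopislipschitz} gives the uniform bound $\norm{\delta_t}_Y\le(\mu C_K)^{-1}\norm{h}_{Y'}$, while the Lipschitz continuity of $\mathsf{m}$ (with constant one) yields the pointwise bound $\abs{\chi_t}\le\abs{\varepsilon\delta_t}$, so $\chi_t$ is bounded in $\Lmat$. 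Hence along a sequence $t_n\downarrow 0$ I can extract weak limits $\delta_{t_n}\rightharpoonup\delta$ in $Y$ and $\chi_{t_n}\rightharpoonup\chi$ in $\Lmat$; since the displayed identity is linear in $(\delta_t,\chi_t)$, passing to the weak limit gives $\mu(\varepsilon\delta,\varepsilon v)+\nu(\chi,\varepsilon v)=(h,v)$ for all $v\in Y$. It then remains only to prove that $\chi=\mathsf{m}'(\varepsilon y;\varepsilon\delta)$, for then $\delta$ solves \eqref{eq:linearized}, uniqueness forces $\delta=z$, the whole family converges by the usual subsequence argument, and $\delta=S'(u;h)$.

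The main obstacle is exactly this identification, localized on the biactive set $\{\abs{\varepsilon y}=g\}$. On $\{\abs{\varepsilon y}<g\}$ one has $\abs{\varepsilon y_{t_n}}<g$ eventually a.e., so $\chi_{t_n}=0$ there; on $\{\abs{\varepsilon y}>g\}$ the map $\mathsf{m}$ is $C^1$ with bounded derivative near $\varepsilon y$, and a mean value expansion $\chi_{t_n}=D\mathsf{m}(\xi_n)\varepsilon\delta_{t_n}$ with $\xi_n\to\varepsilon y$ a.e. permits a weak-times-strong passage to the limit. On $\{\abs{\varepsilon y}=g\}$, however, the directional derivative $\mathsf{m}'(\varepsilon y;\,\cdot\,)=g^{-2}\max(0,\varepsilon y:\cdot)\,\varepsilon y$ is genuinely nonlinear in the direction, so the mere weak convergence $\varepsilon\delta_{t_n}\rightharpoonup\varepsilon\delta$ is \emph{not} enough to pass to the limit pointwise. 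This is where the analysis cannot be carried out term by term.

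To bypass this difficulty I would avoid any pointwise identification on the biactive set and argue instead by a Minty-type monotonicity trick that treats all three regions uniformly. The difference-quotient operator $H\mapsto\Phi_{t_n}(H):=t_n^{-1}\bigl(\mathsf{m}(\varepsilon y+t_nH)-\mathsf{m}(\varepsilon y)\bigr)$ inherits monotonicity from $\mathsf{m}$, so, observing that $\chi_{t_n}=\Phi_{t_n}(\varepsilon\delta_{t_n})$, one has
\[
\bigl(\chi_{t_n}-\Phi_{t_n}(W),\,\varepsilon\delta_{t_n}-W\bigr)\ge 0,\qquad\forall W\in\Lmat.
\]
Testing the equation for $\delta_{t_n}$ with $v=\delta_{t_n}$ and invoking the weak lower semicontinuity of $\norm{\varepsilon\cdot}$ yields the crucial inequality $\limsup_n(\chi_{t_n},\varepsilon\delta_{t_n})\le(\chi,\varepsilon\delta)$. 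Combining this with the pointwise convergence $\Phi_{t_n}(W)\to\mathsf{m}'(\varepsilon y;W)$ for fixed $W$ (the directional differentiability of $\mathsf{m}$ established above, together with the domination $\abs{\Phi_{t_n}(W)}\le\abs{W}$ and dominated convergence, giving strong convergence in $\Lmat$), I pass to the limit in the monotonicity inequality to get $(\chi-\mathsf{m}'(\varepsilon y;W),\varepsilon\delta-W)\ge 0$ for every $W\in\Lmat$. A standard Minty step — choosing $W=\varepsilon\delta-sP$, dividing by $s$, and letting $s\downarrow 0$ using the Lipschitz continuity of $H\mapsto\mathsf{m}'(\varepsilon y;H)$ — then forces $\chi=\mathsf{m}'(\varepsilon y;\varepsilon\delta)$.

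Finally I would upgrade the convergence to the strong topology of $Y$, as required by the statement. Taking $W=\varepsilon\delta$ in the monotonicity inequality provides the matching lower bound $\liminf_n(\chi_{t_n},\varepsilon\delta_{t_n})\ge(\chi,\varepsilon\delta)$, so that $(\chi_{t_n},\varepsilon\delta_{t_n})\to(\chi,\varepsilon\delta)$; feeding this back into $\mu\norm{\varepsilon\delta_{t_n}}^2=(h,\delta_{t_n})-\nu(\chi_{t_n},\varepsilon\delta_{t_n})$ gives $\norm{\varepsilon\delta_{t_n}}\to\norm{\varepsilon\delta}$, which with the weak convergence and Korn's inequality yields $\delta_{t_n}\to\delta=z$ strongly in $Y$. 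Since the limit is the unique solution of \eqref{eq:linearized}, the limit is independent of the chosen sequence, and therefore the entire difference quotient converges, proving directional differentiability with $S'(u;h)=z$.
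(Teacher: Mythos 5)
Your proof is correct, but it takes a genuinely different route from the paper's. The paper never extracts weak limits at all: it takes the candidate $\overline z$ solving \eqref{eq:linearizedeq} (available from Theorem \ref{th:linearizedunique}), writes the equation for $z_t-\overline z$ with $z_t=t^{-1}(y_t-y)$, and inserts the intermediate term $\mathsf{m}(\varepsilon y+t\varepsilon\overline z)$ into the nonlinear difference quotient. The piece $\frac{1}{t}\bigl(\mathsf{m}(\varepsilon y+t\varepsilon z_t)-\mathsf{m}(\varepsilon y+t\varepsilon\overline z)\bigr):\varepsilon(z_t-\overline z)$ is nonnegative by monotonicity and is discarded, leaving the one-shot estimate $\mu C_K\norm{z_t-\overline z}_Y\le\nu\bigl\|t^{-1}\bigl(\mathsf{m}(\varepsilon y+t\varepsilon\overline z)-\mathsf{m}(\varepsilon y)\bigr)-\mathsf{m}'(\varepsilon y;\varepsilon\overline z)\bigr\|$, whose right-hand side vanishes by the directional differentiability of the Nemytskii operator, i.e.\ by \eqref{eq:nemytskiidifferentiable}. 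This yields strong convergence of the entire family directly, with no subsequences, no weak limits, and no identification problem on the biactive set. Your argument replaces this direct comparison by the compactness--Minty machinery: a priori bounds, weak limits $(\delta,\chi)$, identification $\chi=\mathsf{m}'(\varepsilon y;\varepsilon\delta)$ via monotonicity of the difference-quotient operators $\Phi_{t_n}$, then the norm-convergence upgrade and uniqueness to remove the subsequence. Both proofs rest on exactly the same two ingredients --- monotonicity of $\mathsf{m}$ and \eqref{eq:nemytskiidifferentiable} --- but the paper's insertion trick merges them into a single inequality. What your route buys is that existence of a solution to \eqref{eq:linearized} falls out of the limit passage itself, so only the uniqueness part of Theorem \ref{th:linearizedunique} is really needed; what it costs is length and the careful bookkeeping of weak-strong pairings (which you do handle correctly, e.g.\ the $\limsup$ bound on $(\chi_{t_n},\varepsilon\delta_{t_n})$ via weak lower semicontinuity). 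One small remark: your motivational paragraph about pointwise identification on $\{\abs{\varepsilon y}<g\}$ and $\{\abs{\varepsilon y}>g\}$ is dispensable and not fully rigorous as stated, but since the actual argument treats all three regions uniformly through the Minty step, nothing in the proof depends on it.
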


\begin{proof}
    We denote by \(y_t\) the solution to equation (\ref{eq:controlproblem_2}) corresponding to \(u+th\):
\begin{equation}\label{eq:equationyt}
     \mu \int_{\Omega} \varepsilon y_t : \varepsilon v +\nu\int_{\Omega}   \text{max}\left(0,\abs{\varepsilon y_t}  -g \right)  \, \dfrac{\varepsilon y_t}{\abs{\varepsilon y_t}}: \varepsilon v = \int_{\Omega} (u+th) \cdot v,    \quad \forall v \in Y.
\end{equation}
Substracting (\ref{eq:equationyt}) from (\ref{eq:controlproblem_2}), we obtain:
\begin{multline}\label{eq:ztequation}
    \mu \int_{\Omega} \varepsilon (y_t-y) : \varepsilon v +\nu\int_{\Omega} \left(  \text{max}\left(0,\abs{\varepsilon y_t}  -g \right)  \, \dfrac{\varepsilon y_t}{\abs{\varepsilon y_t}} \right.\\ \left. - \text{max}\left(0,\abs{\varepsilon y}  -g \right)  \, \dfrac{\varepsilon y}{\abs{\varepsilon y}} \right): \varepsilon v
     =t \int_{\Omega} h  \cdot v,    \quad \forall v \in Y.
\end{multline}

Let \(\overline{z} \in Y\) be the unique solution to (\ref{eq:linearizedeq}) and define \(z_t:=\frac{y_t-y}{t}\). Substracting (\ref{eq:linearizedeq}) from (\ref{eq:ztequation}), we get
\begin{equation*}
    \mu \int_{\Omega} \varepsilon z_t: \varepsilon v -\mu \int_{\Omega} \varepsilon \overline{z}:\varepsilon v +\nu \int_{\Omega} \dfrac{1}{t}\left( \mathsf{m}(\varepsilon y_t) -\mathsf{m}(\varepsilon y)\right): \varepsilon v -\nu \int_{\Omega} \mathsf{m}'(\varepsilon y;\varepsilon \overline{z} ) :\varepsilon v=0
\end{equation*}
for every \(v \in Y\).
Now, we rewrite this equation as:
\begin{multline*}
    \mu \int_{\Omega} \varepsilon (z_t-\overline{z}): \varepsilon v +\nu \int_{\Omega} \dfrac{1}{t}\left(  \mathsf{m}(\varepsilon y_t)-\mathsf{m}(\varepsilon y+t\varepsilon\overline{z})\right): \varepsilon v \\+\nu \int_{\Omega} \left(\dfrac{\mathsf{m}(\varepsilon y+t \varepsilon\overline{z}) -\mathsf{m}(\varepsilon y)}{t} -\mathsf{m}'(\varepsilon y;\varepsilon \overline{z}) \right):\varepsilon v=0, \quad \forall v \in Y.
\end{multline*}
Testing with \(v=z_t -\overline{z}\), we obtain:
  \begin{multline}\label{eq:differenceztz}
    \mu \int_{\Omega} \varepsilon (z_t-\overline{z}): \varepsilon ( z_t -\overline{z}) +\nu \int_{\Omega} \dfrac{1}{t}\left(  \mathsf{m}( \varepsilon y+t\varepsilon z_t)-\mathsf{m}(\varepsilon y+t\varepsilon\overline{z})\right): \varepsilon (z_t-\overline{z}) \\=-\nu \int_{\Omega} \left(\dfrac{\mathsf{m}(\varepsilon y+t \varepsilon\overline{z}) -\mathsf{m}(\varepsilon y)}{t} -\mathsf{m}'(\varepsilon y;\varepsilon \overline{z}) \right):\varepsilon (z_t -\overline{z}).
  \end{multline}
  Since the function \(\mathsf{m}\) is monotone,
  \begin{equation*}
   \left(  \mathsf{m}( \varepsilon y+t\varepsilon z_t)-\mathsf{m}(\varepsilon y+t\varepsilon\overline{z})\right): \varepsilon (z_t-\overline{z}) \geq 0.
  \end{equation*}
  Hence, (\ref{eq:differenceztz}) implies
  \begin{equation*}
    \mu \norm{\varepsilon(z_t -\overline{z})}^2 \leq -\nu \int_{\Omega} \left(\dfrac{\mathsf{m}(\varepsilon y+t \varepsilon\overline{z}) -\mathsf{m}(\varepsilon y)}{t} -\mathsf{m}'(\varepsilon y;\varepsilon \overline{z}) \right):\varepsilon (z_t -\overline{z}).
  \end{equation*}
  Using the Cauchy-Schwarz inequality we get that
  \begin{equation*}
    \mu \norm{\varepsilon(z_t -\overline{z})}^2 \leq \nu \left\| \dfrac{\mathsf{m}(\varepsilon y+t \varepsilon\overline{z}) -\mathsf{m}(\varepsilon y)}{t} -\mathsf{m}'(\varepsilon y;\varepsilon \overline{z})\right\| \norm{\varepsilon (z_t -\overline{z})},
  \end{equation*}
  which implies
  \begin{equation}\label{eq:boundedzt-z}
    \mu \norm{\varepsilon(z_t -\overline{z})} \leq \nu \left\| \dfrac{\mathsf{m}(\varepsilon y+t \varepsilon\overline{z}) -\mathsf{m}(\varepsilon y)}{t} -\mathsf{m}'(\varepsilon y;\varepsilon \overline{z})\right\|.
  \end{equation}
  Now, using the Korn's inequality in (\ref{eq:boundedzt-z}), we obtain the existence of a constant \(C_K>0\) such that
  \begin{equation}\label{eq:limitsequencezt}
    \mu C_K \norm{z_t -\overline{z}}_{Y} \leq \nu \left\|\dfrac{\mathsf{m}(\varepsilon y+t \varepsilon\overline{z}) -\mathsf{m}(\varepsilon y)}{t} -\mathsf{m}'(\varepsilon y;\varepsilon \overline{z})\right\|.
  \end{equation}
  Thanks to the directional differentiability of \(\mathsf{m}:\Lmat \rightarrow \Lmat\) at \(\varepsilon y\) in the direction of \(\varepsilon \overline{z}\), we know that, as \(t \rightarrow 0\),
  \begin{equation*}
     \left\|\dfrac{\mathsf{m}(\varepsilon y+t \varepsilon\overline{z}) -\mathsf{m}(\varepsilon y)}{t} -\mathsf{m}'(\varepsilon y;\varepsilon \overline{z})\right\|  \rightarrow 0.
  \end{equation*}
  In consequence, from (\ref{eq:limitsequencezt}), we obtain that \(\norm{z_t - \overline{z}}_Y \rightarrow 0\), which implies the result.
\end{proof}


\section{Existence of optimal controls and B-stationarity}
Next we analyze the existence of solutions for the optimal control problem \eqref{eq:controlproblem} and derive a first-order primal necessary optimality condition. To this aim, let us consider the reduced version of the optimal control problem, given by:
\begin{equation}\label{eq:reducedcontrolproblem}
  \min_{u \in U} j(u),
\end{equation}
where \(j:U \rightarrow \mathbb{R}\) is defined by \(j(u):=J(S(u),u)\). We recall here that the solution operator is uniquely determined through the solution of the state equation.

\begin{theorem}\label{th:existenceofglobalmin}
  There exists a global solution for the optimal control problem \eqref{eq:controlproblem}.
\end{theorem}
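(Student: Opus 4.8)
The plan is to apply the direct method of the calculus of variations to the reduced problem \eqref{eq:reducedcontrolproblem}. First I would observe that $j$ is bounded below: since $\alpha > 0$, we have $j(u) \geq \frac{\alpha}{2}\norm{u}^2 \geq 0$, so that $\beta := \inf_{u \in U} j(u)$ is a finite nonnegative number and a minimizing sequence $(u_n) \subset U$ exists. The same coercive term forces $(u_n)$ to be bounded in $U = \Lvec$, and by reflexivity I extract a subsequence (not relabeled) with $u_n \warrow \bar u$ in $\Lvec$. Setting $y_n := S(u_n)$, the Lipschitz estimate \eqref{eq:solopislipschitz} together with the continuous embedding $\Lvec \hookrightarrow Y'$ shows that $(y_n)$ is bounded in $Y$; passing to a further subsequence, $y_n \warrow \bar y$ in $Y$ and, by the compact embedding $\Hvec \hookrightarrow \Lvec$ (Rellich--Kondrachov), $y_n \to \bar y$ strongly in $\Lvec$, while $\varepsilon y_n \warrow \varepsilon \bar y$ in $\Lmat$.

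The crux is to identify the weak limit as a state, i.e. to show $\bar y = S(\bar u)$ by passing to the limit in the nonlinear state equation \eqref{eq:stateequation}. Because $\varepsilon y_n$ converges only weakly and no compactness is available for the symmetrized gradient, I cannot pass to the limit directly in the nonlinear term $\mathsf m(\varepsilon y_n)$. Instead I would exploit the monotonicity of the operator $A : Y \to Y'$ defined by $\langle A(y), v\rangle_{Y',Y} := \mu(\varepsilon y, \varepsilon v) + \nu(\mathsf m(\varepsilon y), \varepsilon v)$ via Minty's trick. Since $A(y_n) = u_n$ in $Y'$, monotonicity of $A$ yields, for every $w \in Y$,
\begin{equation*}
  \langle u_n - A(w),\, y_n - w\rangle_{Y',Y} \geq 0.
\end{equation*}
The decisive observation is that $\langle u_n, y_n\rangle_{Y',Y} = \int_\Omega u_n \cdot y_n \to \int_\Omega \bar u \cdot \bar y$, because $u_n \warrow \bar u$ in $\Lvec$ and $y_n \to \bar y$ strongly in $\Lvec$; the remaining terms pass to the limit thanks to $u_n \warrow \bar u$ in $Y'$ and $y_n \warrow \bar y$ in $Y$, with $A(w)$ fixed. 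Letting $n \to \infty$ gives $\langle \bar u - A(w),\, \bar y - w\rangle_{Y',Y} \geq 0$ for all $w \in Y$. Choosing $w = \bar y - t v$ with $t > 0$, dividing by $t$, and letting $t \downarrow 0$ using the hemicontinuity of $A$ (established in the proof of Theorem \ref{th:linearizedunique} for the linearized operator, and analogous here) produces $\langle \bar u - A(\bar y),\, v\rangle_{Y',Y} = 0$ for all $v \in Y$, i.e. $A(\bar y) = \bar u$ and hence $\bar y = S(\bar u)$.

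It then remains to conclude by weak lower semicontinuity of the cost. The tracking term $\frac12\norm{y - z_d}^2$ is continuous with respect to the strong convergence $y_n \to \bar y$ in $\Lvec$, while $u \mapsto \frac{\alpha}{2}\norm{u}^2$ is weakly lower semicontinuous; therefore
\begin{equation*}
  j(\bar u) = J(\bar y, \bar u) \leq \liminf_{n\to\infty} J(y_n, u_n) = \beta,
\end{equation*}
so that $\bar u$ is a global minimizer of \eqref{eq:reducedcontrolproblem} and $(\bar y, \bar u)$ solves \eqref{eq:controlproblem}. I expect the Minty-type passage to the limit in the monotone nonlinear term to be the main obstacle, precisely because the absence of compactness for $\varepsilon y_n$ rules out a direct identification of $\lim \mathsf m(\varepsilon y_n)$ and forces the monotonicity argument in its place.
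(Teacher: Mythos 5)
Your proof is correct, but it identifies the limit state by a genuinely different mechanism than the paper. The paper never needs Minty's trick: it uses the compactness of the embedding $U \hookrightarrow Y'$ (the adjoint of the Rellich embedding $Y \hookrightarrow \Lvec$) to upgrade $u_n \warrow \bar u$ in $U$ to strong convergence $u_n \to \bar u$ in $Y'$, and then the Lipschitz estimate \eqref{eq:solopislipschitz} --- which is Lipschitz continuity of $S$ with respect to the \emph{$Y'$-norm} --- gives $y_n = S(u_n) \to S(\bar u)$ strongly in $Y$ at once, so the nonlinear term never has to be passed to the limit and the identification $\bar y = S(\bar u)$ is immediate. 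You instead keep only $y_n \warrow \bar y$ in $Y$, extract strong $\Lvec$-convergence of $y_n$ by Rellich, and run the classical Minty--Browder argument (monotonicity of $A$, weak--strong pairing $\int_\Omega u_n \cdot y_n \to \int_\Omega \bar u \cdot \bar y$, then hemicontinuity) to show $A(\bar y) = \bar u$; this is valid, and the hemicontinuity you invoke indeed holds, even more easily than for the linearized operator of Theorem \ref{th:linearizedunique}, since $\mathsf{m}$ is globally Lipschitz so $A$ is continuous from $Y$ to $Y'$. The two routes lean on the same underlying compactness, just on dual sides of the pairing. What the paper's argument buys is brevity and a stronger conclusion along the way (strong $Y$-convergence of the minimizing states), exploiting the fact that \eqref{eq:solopislipschitz} was already established; what your argument buys is robustness, since it needs only monotonicity and hemicontinuity of the state operator and would survive in situations where no Lipschitz estimate of $S$ with respect to the $Y'$-norm is available.
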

\begin{proof}
Since the functional \(J\) is bounded from below, there exists a sequence \( (y_n,u_n)_{n \in \mathbb{N}}\) in \(Y\times U\) such that
\begin{equation}\label{eq:minimizingseq}
  \lim_{n\rightarrow \infty} J(y_n,u_n)=\inf_{(y,u)\in Y\times U} J(y,u):=\hat{J}.
\end{equation}
Since
\begin{equation*}
  J(y_n,u_n)\geq \dfrac{\alpha}{2}\norm{u_n},
\end{equation*}
the  sequence \( (u_n)_{n \in \mathbb{N}}\) is bounded in $U$. Thanks to the Lipschitz continuity of the solution operator, the sequence \( (y_n)_{n \in \mathbb{N}}\) is bounded in $Y$ as well. Consequently,
there exists a constant \(C>0\) such that for every \(n \in \mathbb{N}\)
\begin{equation}
    \norm{(y_n ,u_n)}_{Y\times U}\leq C.
\end{equation}
Thus, there exist a subsequence, denoted in the same way, and limit points \(\overline{y} \in Y\) and \(\overline{u} \in U\) such that
\begin{equation}\label{eq:yuweaklyconverges}
  (y_n , u_n ) \rightharpoonup  (\overline{y}, \overline{u}) \quad \text{in }Y \times U.
\end{equation}
Due to the compact embedding \(U \hookrightarrow Y'\), (\ref{eq:yuweaklyconverges}) implies that \begin{equation}\label{eq:unconverges}
    u_n  \rightarrow \overline{u} \quad \text{in }Y'.
\end{equation}
Thanks to (\ref{eq:unconverges}) and the Lipschitz continuity of $S$, as \(n \rightarrow \infty \), it follows that \[y_n \rightarrow \overline y =S(\overline{u}) \quad \text{in } Y.\]
Since \(J\) is jointly convex and continuous, it is weakly lower semicontinuous, i.e.,
\begin{equation}
  J(\overline{y},\overline{u}) \leq \liminf_{n} J(y_n , u_n)=\hat{J}.
\end{equation}
In conclusion, \( (\overline{y}, \overline{u})\) is a global minimum of \eqref{eq:controlproblem}.
\end{proof}

A first primal optimality condition is directly obtained from the directional differentiability of the solution operator \(S\) and the smoothness of the cost functional. This type of optimality conditions is known in the literature as \emph{Bouligand stationarity}.

\begin{theorem}[B-stationarity]
  Let \((\overline{y},\overline{u}) \in Y \times U\) be a local minimum of problem \eqref{eq:controlproblem}. Then,
  \begin{equation}\label{eq:Bstationarity}
    J_y (\overline{y}, \overline{u})S'(\overline{u};h)+J_u(\overline{y}, \overline{u})h \geq 0, \quad \forall h \in U.
  \end{equation}
\end{theorem}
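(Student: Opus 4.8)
The plan is to reduce the statement to the reduced functional and pass to the limit in nonnegative difference quotients, the crux being a chain rule for the composition of the Fréchet differentiable cost with the merely directionally differentiable solution operator. Since $(\overline{y},\overline{u})$ is a local minimum of \eqref{eq:controlproblem} and $\overline{y}=S(\overline{u})$, the point $\overline{u}$ is a local minimum of the reduced functional $j$ from \eqref{eq:reducedcontrolproblem}. Hence, for any fixed $h\in U$, there is $\rho>0$ with $j(\overline{u}+th)\geq j(\overline{u})$ for all $t\in(0,\rho)$, so that
\begin{equation*}
  \frac{j(\overline{u}+th)-j(\overline{u})}{t}\geq 0,\qquad 0<t<\rho.
\end{equation*}
The claim \eqref{eq:Bstationarity} then follows once I show that this quotient converges, as $t\downarrow 0$, to $J_y(\overline{y},\overline{u})S'(\overline{u};h)+J_u(\overline{y},\overline{u})h$; that is, the whole theorem reduces to establishing the chain rule $j'(\overline{u};h)=J_y(\overline{y},\overline{u})S'(\overline{u};h)+J_u(\overline{y},\overline{u})h$ for $j=J\circ(S,\mathrm{id})$.

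To establish the chain rule I would use that $J$ is a quadratic tracking functional and hence continuously Fréchet differentiable on $Y\times U$, while the inner map $u\mapsto(S(u),u)$ is directionally differentiable at $\overline{u}$ with derivative $(S'(\overline{u};h),h)$ by Theorem \ref{th:solutionopdirdiff}. Setting $\delta_t:=S(\overline{u}+th)-\overline{y}$, the first-order Fréchet expansion of $J$ at $(\overline{y},\overline{u})$ gives
\begin{equation*}
  J(\overline{y}+\delta_t,\overline{u}+th)-J(\overline{y},\overline{u})
  =J_y(\overline{y},\overline{u})\delta_t+t\,J_u(\overline{y},\overline{u})h+r_t,
\end{equation*}
with remainder $r_t=o(\norm{(\delta_t,th)}_{Y\times U})$. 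Dividing by $t$, the first term tends to $J_y(\overline{y},\overline{u})S'(\overline{u};h)$, since $\delta_t/t\to S'(\overline{u};h)$ in $Y$ by Theorem \ref{th:solutionopdirdiff} and $J_y(\overline{y},\overline{u})$ is a bounded linear functional, while the middle term equals $J_u(\overline{y},\overline{u})h$ for every $t$.

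It remains to show $r_t/t\to 0$, and this is where the regularity of $S$ is used. By the Lipschitz estimate \eqref{eq:solopislipschitz} together with the compact embedding $U\hookrightarrow Y'$ one has $\norm{\delta_t}_Y\leq \tfrac{1}{\mu C_K}\norm{th}_{Y'}=O(t)$, so that $\norm{(\delta_t,th)}_{Y\times U}=O(t)$ and therefore $r_t/t=o(\norm{(\delta_t,th)})/t\to 0$. Combining the three limits yields the chain rule and hence \eqref{eq:Bstationarity}. The main obstacle here is precisely that $S$ is only directionally—not Fréchet—differentiable, so the chain rule is not automatic; the argument works because the \emph{outer} functional $J$ is Fréchet differentiable, which permits a genuine first-order expansion whose $o(\cdot)$ remainder is rendered negligible after division by $t$ thanks to the a priori bound $\norm{\delta_t}_Y=O(t)$ supplied by the Lipschitz property \eqref{eq:solopislipschitz}.
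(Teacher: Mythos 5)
Your proposal is correct and follows essentially the same route as the paper: reduce to the functional $j$, establish the chain rule $j'(\overline{u};h)=J_y(\overline{y},\overline{u})S'(\overline{u};h)+J_u(\overline{y},\overline{u})h$, and conclude from the nonnegativity of directional derivatives at a local minimum. The only difference is that the paper delegates the chain rule to \cite[Lemma 3.9]{herzogmeyer2013}, whereas you prove it inline via the Fr\'echet expansion of $J$ together with the Lipschitz bound \eqref{eq:solopislipschitz} (which is precisely the Hadamard-differentiability argument behind that cited lemma).
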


\begin{proof}
  Thanks to Proposition \ref{th:solutionopdirdiff} and \cite[Lema 3.9]{herzogmeyer2013}, the reduced cost functional \(j:U \rightarrow \mathbb{R}\) is directionally differentiable at \(\overline{u}\) and its directional derivative at \(\overline{u}\) in the direction of \(h\) is given by \begin{equation*}
    j'(\overline{u};h)=J'(\overline{y}, \overline{u})(S'(\overline{u};h),h)=J_y (\overline{y}, \overline{u})S'(\overline{u};h)+J_u (\overline{y}, \overline{u})h, \quad \forall h \in U.
    \end{equation*}
  Since \(\overline{u}\) is a local optimum of the problem
  \begin{equation}\label{eq:reducedProblemP}
    \min_{u \in U} j(u)
  \end{equation}and \(h\)
  is an admissible direction, we have that \(j'(\overline{u};h) \geq 0\) or, equivalently,
  \begin{equation*}
    J_y (\overline{y}, \overline{u})S'(\overline{u};h)+J_u (\overline{y}, \overline{u})h \geq 0, \quad \forall h \in U.
  \end{equation*}
\end{proof}


\section{Optimality conditions: Weak stationarity}
Similarly to general nonlinear programming, purely primal necessary conditions are not useful for characterizing local minima of nonsmooth optimal control problems, if they are not accompanied with Lagrange multiplier existence results. In the remaining of this section, existence of Lagrange multipliers is demonstrated by means of a regularization approach and a weak optimality system is obtained in the limit.

\subsection{Adapted regularization}
Let \( (\overline{y}, \overline{u} ) \in Y\times U\) be an arbitrary local minimum of problem \eqref{eq:controlproblem_1}-\eqref{eq:controlproblem_2}. We consider the following regularized optimal control problem:
\begin{equation}\tag{$P_\delta$} \label{eqn:regularizedprob}
  \begin{aligned}
  &\min_{(y,u)\in Y \times U}  \quad J_\delta(y,u)= \dfrac{1}{2} \, \norm{y-z_{d}}^{2}+\dfrac{\alpha}{2} \,\norm{u}^{2}+ \dfrac{1}{2} \norm{u-\overline{u}}^{2}\\
  \noindent & \text{subject to:} \\
  &\mu \int_{\Omega} \varepsilon y : \varepsilon v +\nu \int_{\Omega} \mathsf{m}_\delta (\varepsilon y) : \varepsilon v = \int_{\Omega} u \cdot v, \quad \forall v \in Y,
  \end{aligned}
\end{equation}
where \(\mathsf{m}_\delta\) is a regularized version of \(\mathsf{m}\) that satisfies the following assumptions.

\begin{assumption} \label{ass:properties of regularizing function}
  The regularized function \(\mathsf{m}_\delta\) satisfies the following:
  \begin{enumerate}
      \item \label{it:hipotesis1} \(\mathsf{m}_\delta \in C^1 \left( \mathbb{R}^{N\times N},\mathbb{R}^{N\times N} \right) \), for every \(\delta >0\).
      \item For every \(D \in \mathbb{R}^{N \times N}\)
      \begin{equation}\label{eq:convergenceofmdelta}
          \abs{\mathsf{m}_\delta (D)-\mathsf{m}(D) }\rightarrow 0, \quad \text{if } \delta \rightarrow 0,
      \end{equation}
        \item For every \(D , E \in \mathbb{R}^{N\times N} \)
      \begin{equation}\label{eq:regularizedmismonot}
      (\mathsf{m}_\delta (D)-\mathsf{m}_\delta (E): D - E) \geq 0.
      \end{equation}
      \item There exists a constant \(C>0\) such that for every \(D , E \in \mathbb{R}^{N\times N} \)
      \begin{equation} \abs{\mathsf{m}_\delta (D)-\mathsf{m}_\delta (E)}
      \leq C \abs{D -E}
      \end{equation}
      \item For every \( E \in \mathbb{R} ^{N\times N}\) and \( H \in \mathbb{R} ^{N\times N}\) we have that
      \begin{equation*} \label{th:mdeltaderivativemonotone}
          (\mathsf{m} _\delta ' (E)H,H)\geq 0.
      \end{equation*}
      \item There exists a constant \(C>0\), independent of \(E\), such that \[\abs{\mathsf{m}_\delta'(E) H } \leq C \abs{H}\]
      for every \(E, H \in \mathbb{R}^{N \times N}. \)\label{th:m_deltaboundedder}
      \item For every \(\epsilon >0\), the sequence \((\mathsf{m}_\delta'(E))_{\delta >0} \) converges  uniformly to $\mathsf{m}'(E)$, if \( \left|\abs{E}- g \right| \geq \epsilon \), as \(\delta \rightarrow 0\).
  \end{enumerate}
\end{assumption}

We can take for example the function \(\mathsf{m}_\delta:\mathbb{R}^{N\times N} \rightarrow \mathbb{R}^{N\times N}\) defined by \begin{equation*}
    \mathsf{m}_\delta (E)=\text{max}_\delta \left(0, \abs{E}-g \right)1_\delta (\abs{E}-g)\dfrac{E}{\abs{E}}, \quad \forall E \in \mathbb{R}^{N \times N},
\end{equation*}where, \(\text{max}_\delta (0, \cdot)\) is a regularized version of the  \(\text{max}(0, \cdot)\) function, defined by
\begin{equation*}
    \text{max}_\delta (0,x)=\begin{cases}
     x& \text{if }x >\delta ,\\
     -\dfrac{1}{16\delta^3 }x^4  +\dfrac{3}{8\delta}x^2 +\dfrac{1}{2}x +\dfrac{3\delta}{16} &\text{if } \abs{x}\leq \delta,\\
     0 &\text{if } x< -\delta
     \end{cases}
\end{equation*}for each  \(\delta >0\), and \(1_\delta(\cdot)\) is the derivative of the \(\text{max}_\delta (0, \cdot)\) function \cite[Section 4.2]{de2014nonsmooth}.

\begin{proposition} \label{th:linear_regularized}
  For every \(u \in \Lvec\), there exists a unique solution \(y_\delta \in Y\) to the state equation:
  \begin{equation}\label{eq:regularizedstateq}
    \mu \int_{\Omega} \varepsilon y_\delta : \varepsilon v +\nu \int_{\Omega} \mathsf{m}_\delta (\varepsilon y_\delta) : \varepsilon v = \int_{\Omega} u \cdot v,    \quad \forall v \in Y.
  \end{equation}
  Moreover, the solution operator of this equation, \( S_\delta:\Lvec \rightarrow Y\)
  is weakly continuous and G\^ateaux differentiable. The derivative of \(S_\delta\) at \(u \in \Lvec\) in the direction of \(h \in \Lvec\) is given by the unique solution \(w \in Y\) of the linearized equation:
  \begin{equation}\label{eq:regularizedlinearized}
    \mu \int_{\Omega} \varepsilon w : \varepsilon v +\nu \int_{\Omega} \mathsf{m}_\delta ' (\varepsilon y_\delta)\varepsilon w : \varepsilon v = \int_{\Omega} h \cdot v ,    \quad \forall v \in Y ,
  \end{equation}
  where \(y_\delta =S_\delta(u) \).
\end{proposition}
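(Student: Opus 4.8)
The plan is to prove the three assertions separately, reusing the monotone–operator argument of Theorem~\ref{th:linearizedunique} together with Korn's inequality.

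\textbf{Existence and uniqueness of $y_\delta$.} I would introduce the operator $A_\delta:Y\to Y'$ defined by
\[
\langle A_\delta(y),v\rangle_{Y',Y}:=\mu(\varepsilon y,\varepsilon v)+\nu(\mathsf{m}_\delta(\varepsilon y),\varepsilon v),\quad\forall v\in Y,
\]
which is well defined because the global Lipschitz property of $\mathsf{m}_\delta$ gives $\mathsf{m}_\delta(\varepsilon y)\in\Lmat$. One then checks, exactly as for $T$ in Theorem~\ref{th:linearizedunique}, that $A_\delta$ is hemicontinuous, strictly monotone and coercive: hemicontinuity follows from $\mathsf{m}_\delta\in C^1$ (Assumption~\ref{ass:properties of regularizing function}\,(\ref{it:hipotesis1})) and the Lipschitz bound via dominated convergence; strict monotonicity follows from the monotonicity \eqref{eq:regularizedmismonot} and Korn's inequality, yielding $\langle A_\delta(y_1)-A_\delta(y_2),y_1-y_2\rangle_{Y',Y}\ge\mu C_K\norm{y_1-y_2}_Y^2$; coercivity follows by testing with $y$ and using $(\mathsf{m}_\delta(\varepsilon y)-\mathsf{m}_\delta(0)):\varepsilon y\ge 0$, so that the nonlinear term is bounded below linearly and the quadratic Korn term dominates. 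The Minty--Browder theorem then yields a unique $y_\delta\in Y$ solving \eqref{eq:regularizedstateq}.

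\textbf{Weak continuity of $S_\delta$.} Since $\mathsf{m}_\delta$ is monotone, the computation leading to \eqref{eq:monotonecauchy}--\eqref{eq:solopislipschitz} carries over verbatim and gives the Lipschitz estimate $\norm{S_\delta(u_1)-S_\delta(u_2)}_Y\le\frac{1}{\mu C_K}\norm{u_1-u_2}_{Y'}$. If $u_n\warrow u$ in $U=\Lvec$, the compact embedding $\Lvec\hookrightarrow Y'$ (used in Theorem~\ref{th:existenceofglobalmin}) maps this weakly convergent sequence to a strongly convergent one, $u_n\to u$ in $Y'$, and the Lipschitz estimate then yields $S_\delta(u_n)\to S_\delta(u)$ strongly in $Y$. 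In particular $S_\delta$ is weakly continuous.

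\textbf{G\^ateaux differentiability.} I would first solve \eqref{eq:regularizedlinearized} by Lax--Milgram: the form $a(w,v):=\mu(\varepsilon w,\varepsilon v)+\nu(\mathsf{m}_\delta'(\varepsilon y_\delta)\varepsilon w,\varepsilon v)$ is bounded by the uniform bound $\abs{\mathsf{m}_\delta'(E)H}\le C\abs{H}$ (Assumption~\ref{ass:properties of regularizing function}\,(\ref{th:m_deltaboundedder})) and coercive by the pointwise positivity $(\mathsf{m}_\delta'(E)H,H)\ge 0$ together with Korn, so there is a unique $w\in Y$. For the derivative, set $y_t:=S_\delta(u+th)$, $y:=S_\delta(u)$ and $w_t:=(y_t-y)/t$. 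Subtracting the two state equations, dividing by $t$, and using that $\mathsf{m}_\delta\in C^1$ to write, by the fundamental theorem of calculus,
\[
\frac{\mathsf{m}_\delta(\varepsilon y_t)-\mathsf{m}_\delta(\varepsilon y)}{t}=M_t\,\varepsilon w_t,\qquad M_t:=\int_0^1\mathsf{m}_\delta'\bigl(\varepsilon y+s\,t\,\varepsilon w_t\bigr)\,ds,
\]
gives $\mu(\varepsilon w_t,\varepsilon v)+\nu(M_t\varepsilon w_t,\varepsilon v)=(h,v)$ for all $v\in Y$. Subtracting \eqref{eq:regularizedlinearized}, regrouping $M_t\varepsilon w_t-\mathsf{m}_\delta'(\varepsilon y)\varepsilon w=M_t\varepsilon(w_t-w)+(M_t-\mathsf{m}_\delta'(\varepsilon y))\varepsilon w$, and testing with $v=w_t-w$, the term $\nu(M_t\varepsilon(w_t-w),\varepsilon(w_t-w))$ is nonnegative because $M_t$ inherits the positivity of Assumption~\ref{ass:properties of regularizing function}\,(\ref{th:m_deltaboundedder})'s neighbour (the integrated form of $(\mathsf{m}_\delta'(E)H,H)\ge 0$). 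Cauchy--Schwarz and Korn's inequality then give, as in \eqref{eq:limitsequencezt},
\[
\mu C_K\,\norm{w_t-w}_Y\le\nu\,\norm{(M_t-\mathsf{m}_\delta'(\varepsilon y))\,\varepsilon w}.
\]

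\textbf{Main obstacle.} The delicate point is showing $\norm{(M_t-\mathsf{m}_\delta'(\varepsilon y))\varepsilon w}\to 0$ as $t\to 0$. The Lipschitz estimate yields $\norm{y_t-y}_Y\le\frac{t}{\mu C_K}\norm{h}_{Y'}$, hence $t\,\varepsilon w_t=\varepsilon(y_t-y)\to 0$ in $\Lmat$; however, since the nonlinearity acts on $\varepsilon y$ there is no compact embedding to upgrade this, so only $L^2$-convergence is available. I would therefore argue along subsequences: from any sequence $t_n\to 0$ extract a subsequence along which $\varepsilon y+s\,t_n\,\varepsilon w_{t_n}\to\varepsilon y$ a.e. in $\Omega$; continuity of $\mathsf{m}_\delta'$ and dominated convergence in the $s$-integral (using the uniform bound of Assumption~\ref{ass:properties of regularizing function}\,(\ref{th:m_deltaboundedder})) give $M_{t_n}\to\mathsf{m}_\delta'(\varepsilon y)$ a.e., so that $(M_{t_n}-\mathsf{m}_\delta'(\varepsilon y))\varepsilon w\to 0$ a.e. with integrable dominating function $2C\abs{\varepsilon w}$. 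Lebesgue's theorem then yields convergence to $0$ in $\Lmat$ along the subsequence, and since the limit is independent of the subsequence the whole quantity converges. Consequently $w_t\to w$ in $Y$, which proves that $S_\delta$ is G\^ateaux differentiable at $u$ with $S_\delta'(u)h=w$.
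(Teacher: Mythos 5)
Your proposal is correct, and for the existence/uniqueness part and the weak continuity of $S_\delta$ it follows essentially the paper's own argument (Minty--Browder for the strictly monotone, hemicontinuous, coercive operator; the Lipschitz estimate combined with the compact embedding $U \hookrightarrow Y'$). The genuine difference is in the G\^ateaux differentiability step. The paper splits the difference quotient as
\[
\frac{\mathsf{m}_\delta(\varepsilon y_u + t\varepsilon w_t)-\mathsf{m}_\delta(\varepsilon y_u + t\varepsilon w)}{t} \;+\; \left(\frac{\mathsf{m}_\delta(\varepsilon y_u + t\varepsilon w)-\mathsf{m}_\delta(\varepsilon y_u)}{t}-\mathsf{m}_\delta'(\varepsilon y_u)\varepsilon w\right),
\]
annihilates the first term (the only one containing the unknown $w_t$) by the monotonicity of $\mathsf{m}_\delta$ after testing with $w_t-w$, and disposes of the second term --- which involves only the fixed direction $w$ --- by citing the G\^ateaux differentiability of the Nemytskii operator from Goldberg--Kampowsky--Tr\"oltzsch. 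You instead represent the quotient via the fundamental theorem of calculus as $M_t\varepsilon w_t$, use the positive semidefiniteness of the averaged derivative $M_t$ (the differentiated form of the same monotonicity) to discard the $M_t\varepsilon(w_t-w)$ term, and then prove convergence of the remainder $(M_t-\mathsf{m}_\delta'(\varepsilon y_u))\varepsilon w$ by hand: $t\varepsilon w_t=\varepsilon(y_{u+th}-y_u)\to 0$ in $\Lmat$ from the Lipschitz estimate, a.e. convergence along subsequences, continuity and uniform boundedness of $\mathsf{m}_\delta'$, dominated convergence, and the subsequence principle. Your route is self-contained (no appeal to the abstract Nemytskii differentiability theorem) at the price of a remainder that still contains $w_t$ inside $M_t$, which you correctly neutralize with the observation that $t\varepsilon w_t\to 0$; the paper's decomposition avoids $w_t$ in the remainder altogether and is accordingly shorter. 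Both arguments rest on the same two structural facts, monotonicity and the uniform derivative bound; note only that the positivity you invoke is item (5) of the Assumption on $\mathsf{m}_\delta$, not item (6) as your parenthetical reference suggests, a labeling slip rather than a mathematical one.
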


\begin{proof}
 Since \(\mathsf{m}_\delta \) is monotone and continuous, we have that the operator \(A:Y\rightarrow Y'\), defined by
 \begin{equation*}
     A(y)=\mu (\varepsilon y, \varepsilon \cdot)+\nu (\mathsf{m}_\delta (\varepsilon y),\varepsilon \cdot),
 \end{equation*}is strictly monotone, hemicontinuous and coercive. So, \eqref{eq:regularizedstateq} has a unique solution.
   Similarly to the proof of Proposition \ref{th:existenceofglobalmin}, we have that the operator \(S_\delta\) is weakly continuous. \\ Also, \eqref{eq:regularizedlinearized} has a unique solution \(w \in Y\) for every \(h \in \Lvec\), since \( \mathsf{m}_\delta\) satisfies Assumption \ref{ass:properties of regularizing function}.

   Let \(u, h \in \Lvec\) and \(t>0\) be arbitrary. Let \(y_{u+th}=S_\delta (u+th)\) and \(y_u =S_\delta (u)\). Substracting the equations satisfied by \(y_{u+th}\) and \(y_u\) we obtain \begin{equation}\label{eq:yuth}
       \mu (\varepsilon y_{u+th} -\varepsilon y_u , \varepsilon v )+\nu ( \mathsf{m}_\delta (\varepsilon y_{u+th})-\mathsf{m}_\delta (\varepsilon y_{u}), \varepsilon v)=(th,v), \quad \forall v \in Y.
   \end{equation}
   Now, we substract \eqref{eq:regularizedlinearized} from \eqref{eq:yuth}, and obtain \begin{equation*}
        \mu \int_\Omega \left( \dfrac{\varepsilon (y_ {u+th} - y_{u})}{t} -\varepsilon w \right):\varepsilon v  =-\nu \int_\Omega \left( \dfrac{\mathsf{m}_\delta (\varepsilon y_ {u+th})-\mathsf{m}_\delta (\varepsilon y_ {u})}{t}-\mathsf{m}_\delta' (\varepsilon y_u)\varepsilon w \right):\varepsilon v
    \end{equation*}for all \(v \in Y\).
    By defining \(w_t :=\frac{y_ {u+th} - y_{u}}{t}\), we get \(y_ {u+th}= y_u+t w_t\). Now we rewrite the last equation as
    \begin{multline*}
        \mu (\varepsilon (w_t-w),\varepsilon v)+\nu \left( \dfrac{\mathsf{m}_\delta (\varepsilon y_ {u}+t \varepsilon w_t) -\mathsf{m}_\delta (\varepsilon y_u + t \varepsilon w)}{t} , \varepsilon v \right)\\+\nu  \left(\dfrac{\mathsf{m}_\delta (\varepsilon y_u + t \varepsilon w)-\mathsf{m}_\delta (\varepsilon y_u )}{t} -\mathsf{m}_\delta' (\varepsilon y_u)\varepsilon w , \varepsilon v\right)=0, \quad \forall v \in Y.
    \end{multline*}
    Taking \(v=w_t -w\) in this equation, we get
    \begin{multline*}
      \mu \norm{\varepsilon (w_t -w) } + \dfrac{\nu}{t^2} \left( \mathsf{m}_\delta (\varepsilon y_ {u}+t \varepsilon w_t) -\mathsf{m}_\delta (\varepsilon y_u + t \varepsilon w) , t\varepsilon ( w_t - w) \right)\\
      \leq \nu \left\| \dfrac{\mathsf{m}_\delta (\varepsilon y_u + t \varepsilon w)-\mathsf{m}_\delta (\varepsilon y_u)}{t} - \mathsf{m}_\delta' (\varepsilon y_u) \varepsilon w \right \|  \norm{\varepsilon (w_t -w)}.
    \end{multline*}
    By \eqref{eq:regularizedmismonot}, we have
    \begin{equation*}
         \norm{\varepsilon (w_t -w) } \leq \dfrac{\nu}{\mu} \left\|\dfrac{\mathsf{m}_\delta (\varepsilon y_u + t \varepsilon w)-\mathsf{m}_\delta (\varepsilon y_u )}{t} -\mathsf{m}_\delta' (\varepsilon y_u)\varepsilon w\right\| \norm{\varepsilon (w_t -w)}.
    \end{equation*}
    Due to Korn's inequality, we obtain
    \begin{equation}\label{eq:wtconverges}
         C_K \norm{ w_t -w }_{Y} \leq \dfrac{\nu}{\mu} \left\| \dfrac{\mathsf{m}_\delta (\varepsilon y_u + t \varepsilon w)-\mathsf{m}_\delta (\varepsilon y_u )}{t} -\mathsf{m}_\delta' (\varepsilon y_u)\varepsilon w\right\| .
    \end{equation}
    Since the Nemytskii operator associated with \(\mathsf{m}_\delta\) is G\^ateaux differentiable from \(\Lmat\) to \( \Lmat \) \cite[Theorem 8]{goldberg1992nemytskij}, the right hand side of  \eqref{eq:wtconverges} converges towards zero as \(t \rightarrow 0\), and therefore, $w_t \rightarrow w \quad \text{in }Y.$
\end{proof}

\subsection{Consistency of the regularized solution operator}
Now, we study the convergence of the operator \(S_\delta \) to \(S\), as $\delta \to 0$.
\begin{proposition} \label{th:convofregularization}
 Let \(u \in  \Lvec \) be arbitrary but fixed. If \(\delta \rightarrow 0\), then
\begin{equation}\label{eq:convergenceofSdelta}
    S_\delta (u)\rightarrow S(u) \quad \text{in }Y.
\end{equation}
\end{proposition}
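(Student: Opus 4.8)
The plan is to estimate the difference $y_\delta - y$ directly, \emph{without} passing to a subsequence, by exploiting the monotonicity of $\mathsf{m}_\delta$ together with the fact that the target $y = S(u)$ is fixed; this is precisely what circumvents the missing compactness of the embedding $Y \hookrightarrow \Lmat$ for the symmetrized gradient. Write $y := S(u)$ and $y_\delta := S_\delta(u)$, so that $y$ solves \eqref{eq:stateequation} and $y_\delta$ solves \eqref{eq:regularizedstateq} for the same right-hand side $u$. Subtracting the two equations and testing with $v = y_\delta - y \in Y$ gives
\begin{equation*}
\mu \norm{\varepsilon(y_\delta - y)}^2 + \nu \int_\Omega \bigl( \mathsf{m}_\delta(\varepsilon y_\delta) - \mathsf{m}(\varepsilon y) \bigr) : \varepsilon(y_\delta - y) = 0.
\end{equation*}

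Next I would insert the fixed argument $\varepsilon y$ into the nonlinear term, splitting
\begin{equation*}
\mathsf{m}_\delta(\varepsilon y_\delta) - \mathsf{m}(\varepsilon y) = \bigl( \mathsf{m}_\delta(\varepsilon y_\delta) - \mathsf{m}_\delta(\varepsilon y) \bigr) + \bigl( \mathsf{m}_\delta(\varepsilon y) - \mathsf{m}(\varepsilon y) \bigr).
\end{equation*}
The first bracket, integrated against $\varepsilon(y_\delta - y)$, is nonnegative by the pointwise monotonicity of $\mathsf{m}_\delta$ (Assumption \ref{ass:properties of regularizing function}, \eqref{eq:regularizedmismonot}), so it can be discarded to obtain the one-sided estimate
\begin{equation*}
\mu \norm{\varepsilon(y_\delta - y)}^2 \leq -\nu \int_\Omega \bigl( \mathsf{m}_\delta(\varepsilon y) - \mathsf{m}(\varepsilon y) \bigr) : \varepsilon(y_\delta - y).
\end{equation*}
Applying the Cauchy--Schwarz inequality, cancelling one factor of $\norm{\varepsilon(y_\delta - y)}$ (the case $\varepsilon(y_\delta-y)=0$ being trivial), and invoking Korn's inequality, I arrive at
\begin{equation*}
\mu C_K \norm{y_\delta - y}_Y \leq \nu \norm{\mathsf{m}_\delta(\varepsilon y) - \mathsf{m}(\varepsilon y)}.
\end{equation*}
The decisive feature of this bound is that its right-hand side now depends only on the \emph{fixed} field $\varepsilon y$, so that no compactness of the family $(y_\delta)$ is needed.

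It then remains to prove that $\norm{\mathsf{m}_\delta(\varepsilon y) - \mathsf{m}(\varepsilon y)} \to 0$ as $\delta \to 0$, which I regard as the only genuine step and the main obstacle. Pointwise a.e.\ in $\Omega$ one has $\mathsf{m}_\delta(\varepsilon y(x)) \to \mathsf{m}(\varepsilon y(x))$ by \eqref{eq:convergenceofmdelta}. To supply a dominating function I would combine the uniform (in $\delta$) Lipschitz continuity of $\mathsf{m}_\delta$ from Assumption \ref{ass:properties of regularizing function} with $\mathsf{m}(0)=0$ and the boundedness of $\mathsf{m}_\delta(0)$, the latter following from \eqref{eq:convergenceofmdelta} evaluated at $D=0$. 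This yields, for all sufficiently small $\delta$ and some constant $M$,
\begin{equation*}
\abs{\mathsf{m}_\delta(\varepsilon y(x)) - \mathsf{m}(\varepsilon y(x))} \leq (C+1)\,\abs{\varepsilon y(x)} + M,
\end{equation*}
whose right-hand side is square-integrable since $\varepsilon y \in \Lmat$ and $\Omega$ is bounded. The Lebesgue dominated convergence theorem then gives $\norm{\mathsf{m}_\delta(\varepsilon y) - \mathsf{m}(\varepsilon y)} \to 0$, and inserting this into the preceding estimate produces $\norm{y_\delta - y}_Y \to 0$, which is exactly \eqref{eq:convergenceofSdelta}.
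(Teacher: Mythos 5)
Your proof is correct, and it takes a genuinely different decomposition from the paper's, with a real payoff. Both arguments share the same skeleton (subtract the two state equations, test with the difference, discard a monotone term, then Cauchy--Schwarz and Korn), but the pivot term differs: the paper inserts $\mathsf{m}(\varepsilon y_\delta)$ and uses the monotonicity of $\mathsf{m}$, ending with the residual $\frac{\nu}{\mu}\norm{\mathsf{m}_\delta(\varepsilon y_\delta)-\mathsf{m}(\varepsilon y_\delta)}$, whereas you insert $\mathsf{m}_\delta(\varepsilon y)$ and use the monotonicity of $\mathsf{m}_\delta$ from \eqref{eq:regularizedmismonot}, ending with $\frac{\nu}{\mu}\norm{\mathsf{m}_\delta(\varepsilon y)-\mathsf{m}(\varepsilon y)}$. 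The difference matters in the last step: your residual is evaluated at the \emph{fixed} field $\varepsilon y$, so the pointwise convergence \eqref{eq:convergenceofmdelta}, combined with the uniform Lipschitz bound of Assumption \ref{ass:properties of regularizing function} and the boundedness of $\mathsf{m}_\delta(0)$, gives an integrable dominating function and dominated convergence finishes the proof; this is fully rigorous under the stated assumptions alone. The paper's residual is evaluated at the \emph{moving} argument $\varepsilon y_\delta$, so strictly speaking the pointwise convergence \eqref{eq:convergenceofmdelta} does not by itself imply that this term vanishes --- one needs convergence that is uniform (or at least uniform on bounded sets), which does hold for the concrete regularization given in the paper, where $\sup_{D}\abs{\mathsf{m}_\delta(D)-\mathsf{m}(D)} = O(\delta)$, but is not guaranteed by Assumption \ref{ass:properties of regularizing function}(2) as written. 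In short: you prove the same estimate with the roles of $\mathsf{m}$ and $\mathsf{m}_\delta$ interchanged, and your variant is the more self-contained one given the hypotheses actually stated.
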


\begin{proof}
    Let \(u \in U\). We define \(y:=S(u)\) and \(y_\delta (u):=S_\delta (u)\). Substracting \eqref{eq:regularizedstateq} from  \eqref{eq:controlproblem_2}, we obtain that
    \begin{equation*}
        \mu \int_{\Omega} \varepsilon(y- y_\delta) : \varepsilon v +\nu \int_{\Omega}  (\mathsf{m}(\varepsilon y) -\mathsf{m}_\delta (\varepsilon y_\delta) ): \varepsilon v = 0,  \quad \forall v \in Y.
    \end{equation*}
    This equation is equivalent to
    \begin{multline}\label{eq:restofregularized}
        \mu \int_{\Omega} \varepsilon(y- y_\delta) : \varepsilon v +\nu \int_{\Omega}  (\mathsf{m}(\varepsilon y) -\mathsf{m} (\varepsilon y_\delta) ): \varepsilon v = \\ \nu \int_\Omega (\mathsf{m}_\delta (\varepsilon y_\delta)- \mathsf{m} (\varepsilon y_\delta) ):\varepsilon v  \quad \forall v \in Y.
    \end{multline}
    Testing with \(v=y-y_\delta \) we obtain that
    \begin{multline*}
         \mu \int_{\Omega} \varepsilon(y- y_\delta) : \varepsilon (y- y_\delta) +\nu \int_{\Omega}  (\mathsf{m}(\varepsilon y) -\mathsf{m} (\varepsilon y_\delta) ): (\varepsilon y- \varepsilon y_\delta) = \\ \nu \int_\Omega (\mathsf{m}_\delta (\varepsilon y_\delta)- \mathsf{m} (\varepsilon y_\delta) ):\varepsilon (y- y_\delta).
    \end{multline*}
    Thanks to the monotonicity of \(\mathsf{m}\) and the Cauchy-Schwarz inequality, we have that
    \begin{equation*}
         \norm{\varepsilon (y-y_\delta)} \leq \dfrac{\nu}{\mu} \norm{\mathsf{m}_\delta (\varepsilon y_\delta)- \mathsf{m} (\varepsilon y_\delta)}.
    \end{equation*}
    Now, Korn's inequality provides the existence of a positive constant \(C_K\) such that \begin{equation} \label{eq:convergenceofSdeltatoS}
        C_K\norm{y-y_\delta}_Y \leq \dfrac{\nu}{\mu} \norm{\mathsf{m}_\delta (\varepsilon y_\delta)- \mathsf{m} (\varepsilon y_\delta)}.
    \end{equation}
    Finally, \eqref{eq:convergenceofSdeltatoS} and \eqref{eq:convergenceofmdelta} imply \eqref{eq:convergenceofSdelta}.
\end{proof}

\begin{theorem}
  The regularized solution operator $S_\delta$ is Lipschitz continuous, i.e.,
  \begin{equation}\label{eq:SdeltaisLpischitz}
    \norm{S_\delta (u_1)-S_\delta (u_2)}_Y\leq \frac{1}{\mu C_K}\norm{u_1 -u_2 }_{Y'}.
  \end{equation}
  for arbitrary \(u_1 , u_2 \in Y'\).
\end{theorem}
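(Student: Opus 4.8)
The plan is to replicate verbatim the argument already used to establish the Lipschitz continuity of the unregularized operator $S$ in \eqref{eq:solopislipschitz}, with the only change being that the monotonicity of $\mathsf{m}$ is replaced by the $\delta$-uniform monotonicity of $\mathsf{m}_\delta$ provided by Assumption \ref{ass:properties of regularizing function}, item \eqref{eq:regularizedmismonot}. Concretely, I fix $u_1, u_2$ and set $y_1 := S_\delta(u_1)$ and $y_2 := S_\delta(u_2)$. Subtracting the two copies of the regularized state equation \eqref{eq:regularizedstateq}, the difference $y_1 - y_2 \in Y$ satisfies
\begin{equation*}
\mu \int_\Omega \varepsilon(y_1 - y_2) : \varepsilon v + \nu \int_\Omega \left( \mathsf{m}_\delta(\varepsilon y_1) - \mathsf{m}_\delta(\varepsilon y_2)\right) : \varepsilon v = \langle u_1 - u_2, v\rangle_{Y',Y}, \quad \forall v \in Y,
\end{equation*}
where the right-hand side is interpreted as the duality pairing induced by the embedding $\Lvec \hookrightarrow Y'$, so that the $Y'$ norm appears on the right.

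Next I would test this identity with $v = y_1 - y_2$. The nonlinear term $\nu \int_\Omega (\mathsf{m}_\delta(\varepsilon y_1) - \mathsf{m}_\delta(\varepsilon y_2)):\varepsilon(y_1-y_2)$ is nonnegative by \eqref{eq:regularizedmismonot}, hence can be discarded, leaving, after a Cauchy--Schwarz estimate on the pairing,
\begin{equation*}
\mu \norm{\varepsilon(y_1 - y_2)}^2 \leq \langle u_1 - u_2, y_1 - y_2\rangle_{Y',Y} \leq \norm{u_1 - u_2}_{Y'}\norm{y_1 - y_2}_Y.
\end{equation*}
Applying Korn's inequality in the form $\norm{\varepsilon w}^2 \geq C_K \norm{w}_Y^2$ (the same constant $C_K$ used in \eqref{eq:monotonicityofT}) to the left-hand side, and then dividing by $\norm{y_1 - y_2}_Y$ (the case $y_1 = y_2$ being trivial), yields exactly \eqref{eq:SdeltaisLpischitz}.

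There is no genuine analytical obstacle here, since the computation mirrors the one leading to \eqref{eq:solopislipschitz}; the only point deserving attention is the \emph{uniformity of the Lipschitz constant in $\delta$}, which is what makes the estimate usable in the later asymptotic analysis. This uniformity rests on two facts I would emphasize: the monotonicity inequality \eqref{eq:regularizedmismonot} carries no $\delta$-dependent factor, so the nonlinear term is dropped for every $\delta > 0$ simultaneously; and Korn's constant $C_K$ depends only on $\Omega$, not on $\delta$. Consequently the resulting bound $1/(\mu C_K)$ is independent of $\delta$ and coincides with the constant obtained for $S$ itself.
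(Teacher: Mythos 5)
Your proposal is correct and follows essentially the same route as the paper's own proof: subtract the two copies of the regularized state equation, test with the difference $S_\delta(u_1)-S_\delta(u_2)$, discard the nonlinear term via the monotonicity of $\mathsf{m}_\delta$ from Assumption \ref{ass:properties of regularizing function}, and conclude with Cauchy--Schwarz and Korn's inequality. Your closing remark emphasizing that the constant $1/(\mu C_K)$ is independent of $\delta$ is a worthwhile observation (and indeed what makes the estimate useful later), though the paper leaves it implicit.
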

\begin{proof}
    We define \(w_\delta :=S_\delta (u_1)\) and \(z_\delta :=S_\delta (u_2)\). Substracting the equations satisfied by \(w_\delta \) and \(z_\delta\), we obtain that:
    \begin{equation}\label{eq:restw_dz_d}
        \mu \int_\Omega \varepsilon (w_\delta - z_\delta ):\varepsilon v +\nu \int_\Omega (\mathsf{m}_\delta (\varepsilon w_\delta) - \mathsf{m}_\delta (\varepsilon z_\delta) ):\varepsilon v=\langle u_1 -u_2,  v\rangle, \quad \forall v \in Y.
    \end{equation}
    Taking \(v=w_\delta -z_\delta \) in \eqref{eq:restw_dz_d} and using the monotonicity of  \(m_\delta\), we get that \begin{equation*}
        \mu \norm{\varepsilon (w_\delta -z_\delta)}^2\leq \langle u_1 -u_2,  w_\delta -z_\delta \rangle .
    \end{equation*}
    Using the Cauchy-Schwarz inequality and the Korn inequality in the last equation, we arrive to \begin{equation*}\label{eq:lipschitzestimate}
        \mu C_K \norm{w_\delta -z_\delta }_Y^2 \leq \norm{u_1 -u_2}_{Y'} \norm{w_\delta -z_\delta}_Y.
    \end{equation*}
    which implies that
\begin{equation*}
     \norm{S_\delta (u_1) -S_\delta (u_2) }_Y \leq \dfrac{1}{\mu C_K}  \norm{u_1 -u_2}_{Y'} .
\end{equation*} 
\end{proof}


\begin{proposition}
  Let \((u_\delta)_{\delta>0} \) be a sequence in \(U\) such that \(u_\delta \rightharpoonup u\) in \(U\). Then, \(S_\delta( u_\delta) \to S(u)\) in Y, as \(\delta \to 0\).
\end{proposition}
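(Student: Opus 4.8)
The plan is to combine the two properties of the regularized operators that have already been established, by inserting the fixed-argument operator $S_\delta(u)$ as an intermediate quantity. First I would upgrade the mode of convergence of the controls: the embedding $U \hookrightarrow Y'$ is compact (it was used in this form in the proof of Theorem \ref{th:existenceofglobalmin}, being the dual of the compact Rellich embedding $Y \hookrightarrow \Lvec$), so the weak convergence $u_\delta \rightharpoonup u$ in $U$ implies the strong convergence
\[
  u_\delta \to u \quad \text{in } Y'.
\]

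Next I would split the quantity to be estimated by the triangle inequality,
\[
  \norm{S_\delta(u_\delta) - S(u)}_Y
  \leq \norm{S_\delta(u_\delta) - S_\delta(u)}_Y + \norm{S_\delta(u) - S(u)}_Y,
\]
and treat the two summands separately. For the first summand the decisive point is that the Lipschitz estimate \eqref{eq:SdeltaisLpischitz} holds with constant $\frac{1}{\mu C_K}$ \emph{independent of} $\delta$; hence
\[
  \norm{S_\delta(u_\delta) - S_\delta(u)}_Y
  \leq \frac{1}{\mu C_K}\,\norm{u_\delta - u}_{Y'} \longrightarrow 0,
\]
by the strong convergence in $Y'$ obtained above. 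For the second summand, the consistency result of Proposition \ref{th:convofregularization}, applied at the fixed control $u$, yields $S_\delta(u) \to S(u)$ in $Y$ as $\delta \to 0$. Adding the two bounds gives $\norm{S_\delta(u_\delta) - S(u)}_Y \to 0$, which is the claim.

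The subtle point worth emphasizing is precisely the role of the uniform Lipschitz bound. Proposition \ref{th:convofregularization} only provides consistency for a \emph{fixed} argument, so one cannot pass to the limit directly in the moving quantity $S_\delta(u_\delta)$; the bridge between the moving argument $u_\delta$ and the fixed $u$ is exactly the $\delta$-uniform estimate \eqref{eq:SdeltaisLpischitz}, which would fail to close the argument if the Lipschitz constant degenerated as $\delta \to 0$. Everything else is a routine combination of a triangle inequality with the compactness of $U \hookrightarrow Y'$, so no genuine obstacle beyond verifying that the constant in \eqref{eq:SdeltaisLpischitz} is indeed independent of $\delta$ (which it is, as it comes only from monotonicity of $\mathsf{m}_\delta$ and Korn's inequality).
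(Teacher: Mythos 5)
Your proposal is correct and follows essentially the same route as the paper: compact embedding $U \hookrightarrow Y'$ to get $u_\delta \to u$ strongly in $Y'$, the triangle inequality through the intermediate quantity $S_\delta(u)$, the $\delta$-uniform Lipschitz estimate \eqref{eq:SdeltaisLpischitz} for the first term, and Proposition \ref{th:convofregularization} for the second. The paper additionally extracts a weakly convergent subsequence of $(S_\delta(u_\delta))_{\delta>0}$ via an a priori bound before running this argument, but that step is redundant given the direct strong-convergence estimate, so your streamlined version loses nothing.
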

\begin{proof}
  Since the embedding \(U \hookrightarrow Y'\) is compact, we get that \(u_\delta \rightarrow u\) in \(Y'\). Thanks to \eqref{eq:SdeltaisLpischitz}, we obtain that
  \begin{equation}\label{eq:convergenceduetoLipsch}
    S_\delta (u_\delta)\rightarrow S_\delta (u),
  \end{equation}as \(\delta \rightarrow 0\).
  The triangle inequality implies that, for every \(\delta >0\),
  \begin{equation*}
    \norm{S_\delta (u_\delta) -S(u) }_Y \leq \norm{S_\delta (u_\delta)-S_\delta (u)}_Y+\norm{S_\delta (u)-S(u) }_Y .
  \end{equation*}
  Then, from \eqref{eq:convergenceduetoLipsch} and \eqref{eq:convergenceofSdelta}, we have that \(S_\delta (u_\delta) \rightarrow S(u)\) in $Y$, as \(\delta \rightarrow 0\).
\end{proof}

\subsection{Regularized optimality system}
In order to derive an optimality condition for the regularized problem \eqref{eqn:regularizedprob}, let us start by defining the adjoint state as the solution $p_\delta \in Y$ of the equation:
\begin{equation*}
  \mu \int_{\Omega} \varepsilon p_\delta : \varepsilon v +\nu \int_{\Omega}[\mathsf{m}'_\delta (\varepsilon y_\delta )]^* \varepsilon p_\delta :  \varepsilon v = \int_{\Omega} (y_\delta -z_d) \cdot v, \quad \forall  v \in Y.
\end{equation*}
Thanks to the ellipticity of the terms on the left hand side, the equation is uniquely solvable.

\begin{theorem}
  Let \(\overline{u} \in U \) be a local solution of \eqref{eq:reducedProblemP} and \((y_\delta , u_\delta)\in Y \times U \) a local solution of \eqref{eqn:regularizedprob}. Let \(p_\delta \in Y\) be the adjoint state corresponding to \(y_\delta\). Then, the triplet \( (u_\delta , y_\delta , p_\delta) \) satisfies the following optimality system:
  \begin{align}
    \mu \int_{\Omega} \varepsilon y_\delta : \varepsilon v +\nu \int_{\Omega} \mathsf{m}_\delta (\varepsilon y_\delta ) : \varepsilon v &= \int_{\Omega} u_\delta \cdot v, &&\forall  v \in Y  \label{eq:statereg}\\
    \mu \int_{\Omega} \varepsilon p_\delta : \varepsilon v +\nu \int_{\Omega}[\mathsf{m}'_\delta (\varepsilon y_\delta )]^* \varepsilon p_\delta :  \varepsilon v &= \int_{\Omega} (y_\delta -z_d) \cdot v, && \forall  v \in Y  \label{eq:adjointeq}\\
    -p_\delta &= (\alpha+1) u_\delta -\overline{u}, && \text{a.e. in } \Omega \label{eq:gradienteq}
  \end{align}
\end{theorem}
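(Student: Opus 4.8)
The plan is to reduce the statement to the first-order necessary condition for the \emph{smooth} reduced problem associated with \eqref{eqn:regularizedprob}, and then to convert the state linearization into the adjoint variable by the standard duality argument. Two of the three identities require essentially no work: equation \eqref{eq:statereg} is merely the assertion that $y_\delta = S_\delta(u_\delta)$ is feasible for \eqref{eqn:regularizedprob}, and \eqref{eq:adjointeq} holds by the very definition of $p_\delta$ given just before the theorem (which is uniquely solvable by the ellipticity already noted there). Hence the entire content lies in the gradient equation \eqref{eq:gradienteq}.

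To derive it, I would introduce the reduced regularized functional $j_\delta(u) := J_\delta(S_\delta(u),u)$. Since $(y_\delta,u_\delta)$ is a local solution of \eqref{eqn:regularizedprob} and $y_\delta = S_\delta(u_\delta)$, the control $u_\delta$ is a local minimizer of $j_\delta$. By Proposition \ref{th:linear_regularized} the operator $S_\delta$ is G\^ateaux differentiable, and $J_\delta$ is a smooth quadratic functional of $(y,u)$; hence $j_\delta$ is G\^ateaux differentiable and the chain rule yields, for every $h \in U$,
\begin{equation*}
  j_\delta'(u_\delta)\,h = (y_\delta - z_d,\, S_\delta'(u_\delta)h) + ((\alpha+1)u_\delta - \overline{u},\, h),
\end{equation*}
where the second summand is the $u$-derivative of $\tfrac{\alpha}{2}\norm{u}^2 + \tfrac12\norm{u-\overline{u}}^2$. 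Because $h \mapsto j_\delta'(u_\delta)h$ is linear and $u_\delta$ is a local minimizer, optimality forces $j_\delta'(u_\delta)h = 0$ for all $h \in U$.

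The decisive step is to eliminate the linearized state $w := S_\delta'(u_\delta)h$, which solves \eqref{eq:regularizedlinearized}, in favour of $p_\delta$. Testing \eqref{eq:regularizedlinearized} with $v = p_\delta$ and the adjoint equation \eqref{eq:adjointeq} with $v = w$ produces two identities whose left-hand sides coincide, since by definition of the pointwise adjoint tensor $([\mathsf{m}_\delta'(\varepsilon y_\delta)]^*\varepsilon p_\delta,\, \varepsilon w) = (\varepsilon p_\delta,\, \mathsf{m}_\delta'(\varepsilon y_\delta)\varepsilon w)$. Subtracting gives $(y_\delta - z_d,\, w) = (h,\, p_\delta)$, whence $j_\delta'(u_\delta)h = (p_\delta + (\alpha+1)u_\delta - \overline{u},\, h)$. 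As this vanishes for every $h \in U$, the fundamental lemma of the calculus of variations delivers \eqref{eq:gradienteq} pointwise a.e.\ in $\Omega$.

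The main obstacle is precisely this duality identification: it requires that the linear map $\varepsilon w \mapsto \mathsf{m}_\delta'(\varepsilon y_\delta)\varepsilon w$ and its transpose be well-defined and bounded operators on $\Lmat$, so that both the linearized problem \eqref{eq:regularizedlinearized} and the adjoint problem \eqref{eq:adjointeq} are uniquely solvable and the operator $[\mathsf{m}_\delta'(\varepsilon y_\delta)]^*$ appearing in \eqref{eq:adjointeq} is exactly the transpose needed for the cancellation. This is guaranteed by the $C^1$ regularity and the uniform boundedness of $\mathsf{m}_\delta'$ in Assumption \ref{ass:properties of regularizing function}, together with the ellipticity granted by its monotonicity. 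Once these properties are invoked, the remainder is the routine smooth KKT computation sketched above.
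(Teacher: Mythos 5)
Your proposal is correct and follows essentially the same route as the paper: reduce to the smooth functional $j_\delta(u)=J_\delta(S_\delta(u),u)$, differentiate via the chain rule and the G\^ateaux differentiability of $S_\delta$ from Proposition \ref{th:linear_regularized}, and then eliminate the linearized state by testing the adjoint equation \eqref{eq:adjointeq} with $S_\delta'(u_\delta)h$ and the linearized equation \eqref{eq:regularizedlinearized} with $p_\delta$, using the transpose identity for $[\mathsf{m}_\delta'(\varepsilon y_\delta)]^*$. The paper performs these two substitutions sequentially inside the expression for $j_\delta'(u_\delta)h$ rather than subtracting two tested identities, but this is the same duality argument, and your concluding step (linearity of the derivative plus unconstrained local optimality forces $j_\delta'(u_\delta)h=0$ for all $h$) matches the paper's as well.
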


\begin{proof}
    For every \(u \in U\), we define the reduced regularized cost functional \[j_\delta (u)=\frac{1}{2} \,   \norm{S_\delta (u) -z_{d}}  _{L^{2} (\Omega)^{N}}^{2}+\frac{\alpha}{2} \,\norm{u} _{L^{2} (\Omega)^{N} } ^{2}+\frac{1}{2} \norm{u-\overline{u}} _{L^{2} (\Omega)^{N} } ^{2} .\]
    Note that \(u_\delta\) is a local solution of the reduced problem
    \begin{equation}\label{eq:reducedregproblem}
        \min_{u\in U}\, j_\delta (u).
    \end{equation}
    Thanks to the chain rule, we have that \(j_\delta\) is 
    G\^ateaux differentiable and its derivative at \(u_\delta\) in the direction of \(h \in U\)  is given by
    \begin{align}
    j_\delta ' (u_\delta)h &=J_y (y_\delta, u_\delta )S_\delta '(u_\delta;h)+J_u (y_\delta, u_\delta)h+(u-\overline{u},h) \nonumber\\
    &=(y_\delta -z_d ,S_\delta '(u_\delta;h)) + \alpha (u,h)+(u-\overline{u},h) \label{eq:dirderivativejdelta}
    \end{align}
    Replacing \eqref{eq:adjointeq} in \eqref{eq:dirderivativejdelta}, we have that \begin{align}
         j_\delta ' (u_\delta)h & = \mu ( \varepsilon p_\delta , \varepsilon S_\delta '(u_\delta;h) ) +\nu ([\mathsf{m}'_\delta (\varepsilon y_\delta )]^* \varepsilon p_\delta ,  \varepsilon S_\delta '(u_\delta;h) )  + \alpha (u,h)+(u-\overline{u},h)\nonumber\\
         & =  \mu ( \varepsilon p_\delta , \varepsilon S_\delta '(u_\delta;h) ) +\nu ( \varepsilon p_\delta , \mathsf{m}'_\delta (\varepsilon y_\delta )  \varepsilon S_\delta '(u_\delta;h) )  + \alpha (u,h)+(u-\overline{u},h) \label{eq:replaceeyforeu} .
    \end{align}
    From the linearized equation \eqref{eq:regularizedlinearized} we know that
    \begin{equation*}
        \mu ( \varepsilon S_\delta '(u_\delta;h) , \varepsilon v)+\nu ( \mathsf{m}'_\delta (\varepsilon y_\delta )  \varepsilon S_\delta '(u_\delta;h) ,\varepsilon v   ) =(h, v).
    \end{equation*}
    Replacing the latter in \eqref{eq:replaceeyforeu}, we obtain that
    \begin{equation} \label{eq:derivativeofjdeltau}
        j_\delta ' (u_\delta)h= (p_\delta , h)+\alpha (u,h)+(u-\overline{u},h), \quad \forall h \in U.
    \end{equation}
    Since \(u_\delta\) is a local minimum of \eqref{eq:reducedregproblem}, we have that \(j_\delta ' (u_\delta)h=0\), for every \(h \in U\). Then from \eqref{eq:derivativeofjdeltau}, we deduce \eqref{eq:gradienteq}.
    \end{proof}

\begin{theorem} \label{th:convergenceofminimizers}
  Let \(\overline{u}\in U\) a local minimizer of the reduced problem \eqref{eq:reducedcontrolproblem}. Then there exists a sequence of local minimizers \((u_\delta)_{\delta >0}\) of problem \eqref{eqn:regularizedprob} such that
  \begin{equation} \label{eq:udeltaconvergence}
    u_\delta \rightarrow \overline{u} \quad \text{in } U,
  \end{equation}
  as \(\delta \rightarrow 0\). Moreover,
  \begin{equation}\label{eq:statesdeltaconverg}
    S_\delta (u_\delta) \rightarrow S(\overline{u}) \quad \text{in }Y.
  \end{equation}
\end{theorem}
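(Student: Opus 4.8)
The plan is to follow the localization strategy that is standard in the regularization analysis of nonsmooth control problems. Since $\overline{u}$ is only a \emph{local} minimizer of $j$, I first fix a radius $\rho>0$ such that $j(\overline{u}) \le j(u)$ for all $u$ in the closed ball $B_\rho := \{u \in U : \norm{u-\overline{u}} \le \rho\}$, and I consider the auxiliary \emph{ball-constrained} regularized problems $\min_{u \in B_\rho} j_\delta(u)$, with $j_\delta$ the reduced regularized cost functional $j_\delta(u)=\frac12\norm{S_\delta(u)-z_d}^2+\frac{\alpha}{2}\norm{u}^2+\frac12\norm{u-\overline{u}}^2$ introduced above. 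The first step is to guarantee that each of these problems admits a minimizer $u_\delta$: the admissible set $B_\rho$ is bounded, closed and convex, hence weakly sequentially compact in the Hilbert space $U$, and $j_\delta$ is weakly lower semicontinuous, since the tracking term is continuous along weakly convergent sequences by the weak continuity of $S_\delta$ (Proposition \ref{th:linear_regularized}), while the two quadratic control terms are weakly lower semicontinuous. A direct method argument then yields $u_\delta \in B_\rho$.

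Next I would extract a weak subsequential limit and identify it. By construction $u_\delta \in B_\rho$ is bounded, so along a subsequence $u_\delta \rightharpoonup \tilde{u}$ in $U$, with $\tilde{u}\in B_\rho$ because $B_\rho$ is weakly closed. The preceding proposition (the consistency result $u_\delta \rightharpoonup u \Rightarrow S_\delta(u_\delta)\to S(u)$ in $Y$) gives $S_\delta(u_\delta)\to S(\tilde{u})$ in $Y$, hence in $\Lvec$, so the tracking part of $j_\delta(u_\delta)$ converges to that of $j(\tilde u)$. Comparing with $\overline{u}\in B_\rho$ and using $j_\delta(\overline{u})\to j(\overline{u})$ (which follows from $S_\delta(\overline{u})\to S(\overline{u})$ by Proposition \ref{th:convofregularization}, the third term vanishing at $\overline u$), together with weak lower semicontinuity of the control terms, I obtain the squeeze
\begin{equation*}
  j(\tilde{u})+\tfrac12\norm{\tilde{u}-\overline{u}}^2 \le \liminf_{\delta} j_\delta(u_\delta) \le \limsup_{\delta} j_\delta(u_\delta) \le \lim_{\delta} j_\delta(\overline{u}) = j(\overline{u}) \le j(\tilde{u}),
\end{equation*}
where the final inequality uses local optimality of $\overline{u}$ on $B_\rho$. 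This forces $\norm{\tilde{u}-\overline{u}}=0$, so $\tilde{u}=\overline{u}$, and moreover $\lim_\delta j_\delta(u_\delta)=j(\overline{u})$.

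It remains to upgrade to \emph{strong} convergence and to confirm that $u_\delta$ is genuinely a local minimizer of the unconstrained problem \eqref{eqn:regularizedprob}. Subtracting the convergent tracking term from $j_\delta(u_\delta)$ yields $\lim_\delta\big(\frac{\alpha}{2}\norm{u_\delta}^2+\frac12\norm{u_\delta-\overline{u}}^2\big)=\frac{\alpha}{2}\norm{\overline{u}}^2$; combining the weak lower semicontinuity bound $\liminf_\delta\frac{\alpha}{2}\norm{u_\delta}^2\ge\frac{\alpha}{2}\norm{\overline{u}}^2$ with the nonnegativity of the last term gives $\norm{u_\delta}\to\norm{\overline{u}}$, and with $u_\delta\rightharpoonup\overline{u}$ the Radon--Riesz property of the Hilbert space $U$ yields $u_\delta\to\overline{u}$ strongly, which a subsequence argument extends to the whole family. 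The strong convergence is precisely what makes the ball constraint inactive: for $\delta$ small, $u_\delta$ lies in the open ball, so it is a local minimizer of $j_\delta$ over all of $U$, i.e.\ of \eqref{eqn:regularizedprob}, proving \eqref{eq:udeltaconvergence}, while \eqref{eq:statesdeltaconverg} is already contained in $S_\delta(u_\delta)\to S(\overline{u})$. I expect the main obstacle to be this passage to strong control convergence rather than merely weak convergence, since it is indispensable for deactivating the constraint; the absence of compactness at the level of the states is circumvented entirely by invoking the monotonicity-based strong $Y$-convergence of $S_\delta(u_\delta)$ established in the preceding proposition, without which the tracking term could not be passed to the limit.
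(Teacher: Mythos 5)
Your proposal is correct and follows essentially the same route as the paper: the same localization via the ball-constrained problems $\min_{u\in B(\overline{u},r)} j_\delta(u)$, the same squeeze argument using $j_\delta(u_\delta)\le j_\delta(\overline{u})\to j(\overline{u})$ together with weak lower semicontinuity to identify the weak limit as $\overline{u}$, the upgrade to strong convergence from the convergence of the penalty term, and the deactivation of the ball constraint for small $\delta$. Your minor variations (Radon--Riesz instead of the paper's identity $\tfrac12\norm{u_\delta-\overline{u}}^2=j_\delta(u_\delta)-J(S_\delta(u_\delta),u_\delta)$, and the interior-point observation instead of the explicit triangle-inequality localization with $R=r/2$) are equivalent in substance.
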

\begin{proof}
    Let \(B(\overline{u},r)\) be a closed ball around \(\overline{u}\) such that:
    \begin{equation*}
        j(\overline{u})\leq j(u), \quad \forall u \in B(\overline{u},r).
    \end{equation*}
    We consider the following auxiliary optimal control problem:
    \begin{equation}\label{eq:auxcontrol}
      \min_{u \in B(\overline{u} ,r)} j_\delta (u)
    \end{equation}
The existence of global minimizers for the problem \eqref{eq:auxcontrol} can be proved similarly to the proof of Proposition \ref{th:existenceofglobalmin}. Let \(u_\delta\) be a global minimizer of \eqref{eq:auxcontrol}. Since for every \(\delta >0\), \(u_\delta \in B(\overline{u},r)\), there exists a subsequence, denoted by the same symbol, and a limit \(\hat{u} \in U\) such that:
\begin{equation}\label{eq:u_deltaweaklyconverges}
    u_\delta \rightharpoonup \hat{u} \quad \text{in } U.
\end{equation}
Since \(\overline{u}\) is feasible for problem \eqref{eq:auxcontrol}, we have that
\begin{equation}\label{eq:JSdeltaudelta}
    J(S_\delta (u_\delta), u_\delta ) \leq j_\delta (u_\delta)\leq  j_\delta (\overline{u})=J(S_\delta(\overline{u}),\overline{u}).
\end{equation}
Proposition \ref{th:convofregularization} implies that \(S_\delta (\overline{u}) \rightarrow S(\overline{u})\), as \(\delta \rightarrow 0\). Since \(J\) is continuous, we obtain that
\begin{equation*}
    J(S_\delta (\overline{u}), \overline{u} )\rightarrow J(S (\overline{u}), \overline{u}),
\end{equation*}
as \(\delta \rightarrow 0\). Then by \eqref{eq:JSdeltaudelta}, we obtain
\begin{equation}\label{eq:limsupjdelta}
   \underset{\delta \rightarrow 0}{\text{lim sup }} j_\delta( u_\delta ) \leq j(\overline{u}).
\end{equation}
On the other hand, from \eqref{eq:u_deltaweaklyconverges} and the properties of $J$, we get that
\begin{equation*}
  J(S(\hat{u}), \hat{u})+\dfrac{1}{2}\norm{\hat{u}-\overline{u}}^2 \leq \underset{\delta \rightarrow 0}{\text{lim inf }}j_\delta( u_\delta ).
\end{equation*}
Since  \(\overline{u}\) is a local minimizer of \eqref{eq:reducedProblemP}, it follows that \(j(\overline{u})\leq J(S(\hat{u}), \hat{u}) \). Then from the last equation we obtain that:
\begin{equation}\label{eq:liminfjdelta}
  j(\overline{u})+\dfrac{1}{2}\norm{\hat{u}-\overline{u}}^2 \leq \underset{\delta \rightarrow 0}{\text{lim inf }}j_\delta( u_\delta ).
\end{equation}
Combining \eqref{eq:limsupjdelta} and \eqref{eq:liminfjdelta}, we have that \begin{equation}\label{eq:limofjdeltaudelta}
  j(\overline{u})\leq  \underset{\delta \rightarrow 0}{\text{lim inf }}j_\delta( u_\delta ) \leq    \underset{\delta \rightarrow 0}{\text{lim sup }} j_\delta( u_\delta )\leq j(\overline{u}).
\end{equation}
Also,
\begin{equation*}
j(\overline{u})+\dfrac{1}{2}\norm{\hat{u}-\overline{u}}^2 \leq j(\overline{u}),
\end{equation*}
Hence, we obtain that \(\hat{u}=\overline{u}\).

Similarly, from \eqref{eq:JSdeltaudelta}-\eqref{eq:limsupjdelta}, we have that
\begin{equation*}
  \underset{\delta \rightarrow 0}{\text{lim sup }} J( S_\delta
  (u_\delta),u_\delta ) \leq j(\overline{u}).
\end{equation*}
Also,  \eqref{eq:u_deltaweaklyconverges} implies that
\begin{equation*}
  J(S(\hat{u}), \hat{u}) \leq \underset{\delta \rightarrow 0}{\text{lim inf }}J( S_\delta (u_\delta) , u_\delta ).
\end{equation*}
Combining the last equation and \eqref{eq:limsupjdelta}, we obtain that
\begin{equation*}
    j(\overline{u}) \leq J(S(\hat{u}), \hat{u}) \leq \underset{\delta \rightarrow 0}{\text{lim inf }}J( S_\delta (u_\delta) , u_\delta )\leq  \underset{\delta \rightarrow 0}{\text{lim sup }} J( S_\delta
        (u_\delta),u_\delta )  \leq j(\overline{u}).
\end{equation*}
From this equation, we have that \(J(S_\delta(u_\delta), u_\delta )\rightarrow j(\overline{u})\). From  \eqref{eq:limofjdeltaudelta}, we obtain that \(j_\delta (u_\delta)\rightarrow j(\overline{u})\), as \(\delta \rightarrow 0\). Since
\begin{equation*}
\dfrac{1}{2}\norm{u_\delta -\overline{u}}^2 = j_\delta (u_\delta)-J(S_\delta(u_\delta),u_\delta ).
\end{equation*}
we deduce that
\begin{equation}
    \dfrac{1}{2}\norm{u_\delta -\overline{u}}_U^2 \rightarrow 0,
\end{equation}as \(\delta \rightarrow 0\). Therefore, we conclude \eqref{eq:udeltaconvergence}. The convergence of the associated states of  \eqref{eq:statesdeltaconverg}, is deduced from Proposition \ref{th:convofregularization}.

Next, we will use a localization argument from \cite{casas2002error} in order to show that, for \(\delta >0\) sufficiently small, \(u_\delta\) is a local minimizer of \eqref{eq:reducedregproblem}. Let us take \(R=\frac{r}{2}\). Let \(u \in B(u_\delta, R)\) be arbitrary, then
\begin{equation}\label{eq:triangle1}
  \norm{u-u_\delta}_U \leq \dfrac{r}{2}.
\end{equation}
From \eqref{eq:udeltaconvergence}, there exists \(\overline{\delta}>0\), such that for every \(\delta \leq \overline{\delta}\), \begin{equation}\label{eq:triangle2}
     \norm{u_\delta -\overline{u}}_U \leq \dfrac{r}{2} .
\end{equation}Therefore, from the triangle inequality, we have
\begin{equation*}
    \norm{u-\overline{u}}_U \leq \norm{u-u_\delta}_U +\norm{u_\delta -\overline{u}}_U \leq \dfrac{r}{2}+\dfrac{r}{2} =r,
 \end{equation*}
 where we have used \eqref{eq:triangle1}
 and \eqref{eq:triangle2}. Consequently, we have that \(u\in B(\overline{u},r ) \). Since \(u_\delta \), is a global minimizer of \eqref{eq:auxcontrol}, with \(\delta \leq \overline{\delta}\),  we conclude that
 \begin{multline*}
      \frac{1}{2} \,   \norm{S_\delta (u_\delta) -z_{d}}  _{L^{2} (\Omega)^{N}}^{2}+\frac{\alpha}{2} \,\norm{u_\delta} _{L^{2} (\Omega)^{N} } ^{2}+\frac{1}{2} \norm{u_\delta-\overline{u}} _{L^{2} (\Omega)^{N} } ^{2} \leq  \frac{1}{2} \,   \norm{S_\delta (u) -z_{d}}  _{L^{2} (\Omega)^{N}}^{2}\\+\frac{\alpha}{2} \,\norm{u} _{L^{2} (\Omega)^{N} } ^{2}+\frac{1}{2} \norm{u-\overline{u}} _{L^{2} (\Omega)^{N} } ^{2} ,
 \end{multline*}
 for every \(u \in B(u_\delta , R ) \).
\end{proof}

\subsection{Passing to the limit}
In the following lemma we establish the boundedness of the dual variables as a preparatory step to derive a weak optimality system.
\begin{lemma} \label{th:dualvarbounded}
  Let \((p_\delta)_{\delta >0} \) the sequence of adjoint states defined by \eqref{eq:adjointeq} and associated with a sequence of local solutions \((u_\delta)_{\delta >0} \) from Proposition \ref{th:convergenceofminimizers}. For every \(\delta >0\), we define the Lagrange multiplier
  \begin{equation*}
      \lambda_\delta :=[\mathsf{m}'_\delta (\varepsilon y_\delta )]^* \varepsilon p_\delta \in \Lmat.
  \end{equation*}
  Then there exist constants \(C_1 ,C_2 >0\) such that, for all \(\delta >0\), we have
  \begin{equation*}
      \norm{p_\delta}_Y\leq C_1,  \quad \norm{\lambda_\delta} \leq C_2.
  \end{equation*}
\end{lemma}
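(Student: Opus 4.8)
The plan is to test the adjoint equation \eqref{eq:adjointeq} with $v=p_\delta$, absorb the nonlinear term using the positive semidefiniteness of $\mathsf{m}'_\delta$, and thereby obtain a coercive estimate for $p_\delta$; the bound on $\lambda_\delta$ then follows from the uniform pointwise bound on $\mathsf{m}'_\delta$. First I would set $v=p_\delta$ in \eqref{eq:adjointeq} to get
\begin{equation*}
  \mu \norm{\varepsilon p_\delta}^2 + \nu \int_\Omega [\mathsf{m}'_\delta(\varepsilon y_\delta)]^* \varepsilon p_\delta : \varepsilon p_\delta = \int_\Omega (y_\delta - z_d)\cdot p_\delta .
\end{equation*}
Since the adjoint $[\,\cdot\,]^*$ is taken with respect to the Frobenius inner product, the second integrand equals $(\mathsf{m}'_\delta(\varepsilon y_\delta)\varepsilon p_\delta):\varepsilon p_\delta$, which is nonnegative by the positivity property in Assumption~\ref{ass:properties of regularizing function} (item (5)). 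Discarding this term yields
\begin{equation*}
  \mu \norm{\varepsilon p_\delta}^2 \leq \norm{y_\delta - z_d}\,\norm{p_\delta}_{\Lvec}.
\end{equation*}

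Next I would apply the continuous embedding $Y \hookrightarrow \Lvec$ together with Korn's inequality to bound $\norm{\varepsilon p_\delta}$ from below and $\norm{p_\delta}_{\Lvec}$ from above by $\norm{p_\delta}_Y$, obtaining an estimate of the form $\norm{p_\delta}_Y \leq \frac{C}{\mu C_K}\,\norm{y_\delta - z_d}$. The right-hand side is uniformly bounded in $\delta$: by Theorem~\ref{th:convergenceofminimizers} we have $u_\delta \to \overline u$ in $U$, so $(u_\delta)_{\delta>0}$ is bounded, and the a priori estimate \eqref{eq:ydeltabounded} then shows that $(y_\delta)_{\delta>0}$ is bounded in $Y$, hence in $\Lvec$, so that $\norm{y_\delta - z_d}$ is bounded. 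This provides a constant $C_1>0$ with $\norm{p_\delta}_Y \leq C_1$ for all $\delta>0$.

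For the multiplier, I would invoke the uniform operator bound in Assumption~\ref{ass:properties of regularizing function} (item (6)), namely $\abs{\mathsf{m}'_\delta(E)H}\leq C\abs{H}$ with $C$ independent of $E$ and $\delta$. Since the Frobenius operator norm of a linear map equals that of its transpose, the same bound holds for $[\mathsf{m}'_\delta(E)]^*$, whence $\abs{\lambda_\delta(x)}\leq C\,\abs{\varepsilon p_\delta(x)}$ a.e. in $\Omega$. Integrating and using the previous step gives $\norm{\lambda_\delta}\leq C\,\norm{\varepsilon p_\delta}\leq C\,\norm{p_\delta}_Y \leq C\,C_1 =: C_2$, which is the claimed bound.

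The only delicate point is the treatment of the transpose $[\mathsf{m}'_\delta]^*$: one must check that both the sign property of item (5) and the operator bound of item (6) transfer to the adjoint. Both are immediate from the identities $\langle A^*X,X\rangle=\langle X,AX\rangle$ and $\|A^*\|=\|A\|$ with respect to the Frobenius inner product, so no symmetry of $\mathsf{m}'_\delta$ is required. Everything else is a routine coercivity-plus-Korn argument, and the uniformity in $\delta$ hinges solely on the fact that the constants in Assumption~\ref{ass:properties of regularizing function} are $\delta$-independent and that $(y_\delta)_{\delta>0}$ is bounded.
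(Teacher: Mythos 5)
Your proof is correct and follows essentially the same route as the paper's: test the adjoint equation \eqref{eq:adjointeq} with $v=p_\delta$, discard the nonnegative term using Assumption \ref{ass:properties of regularizing function}(5), apply Cauchy--Schwarz, Korn's inequality and the embedding $Y \hookrightarrow \Lvec$ together with the boundedness of $(y_\delta)_{\delta>0}$, and then bound $\lambda_\delta$ via the $\delta$-independent operator bound of Assumption \ref{ass:properties of regularizing function}(6). You are in fact slightly more careful than the paper in checking that both the sign property and the operator bound transfer to the adjoint $[\mathsf{m}'_\delta(\varepsilon y_\delta)]^*$, a point the paper's proof passes over silently.
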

\begin{proof}
  Since \((\lambda_\delta, \varepsilon v)=(\varepsilon p_\delta,\mathsf{m}'_\delta (\varepsilon y_\delta  ) \varepsilon v )\), testing with \(v=p_\delta\) in \eqref{eq:adjointeq} and using Assumption \ref{ass:properties of regularizing function}, we get
  \begin{equation*}
    \mu \norm{\varepsilon p_\delta}^2 \leq (y_\delta -z_d , p_\delta).
  \end{equation*}
  Using Korn's inequality and the Cauchy-Schwarz inequality, we get that
  \begin{equation*}
       \mu C_K \norm{ p_\delta}_{Y}^2 \leq \norm{y_\delta -z_d} \norm{ p_\delta},
  \end{equation*}
  Furthermore, the embedding \(Y \hookrightarrow \Lvec\) implies the existence of a constant \(C>0\) such that \begin{equation*}
      \mu C_K \norm{ p_\delta}_{Y} \leq C( C \norm{y_\delta}_Y +\norm{z_d}.
  \end{equation*}
  From \eqref{eq:statesdeltaconverg}, we deduce that the sequence \((y_\delta)_{\delta >0}\) is uniformly bounded, and, thus, the sequence \((p_\delta)_{\delta >0}\) is also bounded in $Y$.

  Now, we show that the sequence \((\lambda_\delta)_{\delta>0}\) is bounded. Since by Assumption \ref{ass:properties of regularizing function} , $|\mathsf{m}'_\delta (E)D | \leq C |D|,$ for all $D, E \in \mathbb{R}^{N \times N}$, it follows that
  \begin{equation*}
    \|\lambda_\delta\|  = \| \mathsf{m}'_\delta (\varepsilon y_\delta ) \varepsilon p_\delta \|  \leq  \| \mathsf{m}'_\delta (\varepsilon y_\delta )\|_{L^\infty} \|\varepsilon p_\delta \| ,
  \end{equation*}
  which, thanks to the boundedness of \((p_\delta)_{\delta >0}\) in $Y$, implies that the sequence \((\lambda_\delta)_{\delta >0}\) is bounded in $\Lmat$.
\end{proof}

In the following proposition we obtain an optimality system for the problem \eqref{eq:controlproblem}.
\begin{theorem}[Weak stationarity]\label{th:optimalityafterlimit}
  Let $(\overline{y},\overline{u}) \in Y \times U$ be a local solution to problem \eqref{eq:controlproblem}. There exist an adjoint state $p \in Y$ and a multiplier $\lambda \in \Lmat$, such that
  \begin{subequations} \label{eq: weak system}
	  \begin{align}
	    & \mu \int_{\Omega} \varepsilon \overline{y}  : \varepsilon v +\nu \int_{\Omega}  \mathsf{m}(\varepsilon \overline{y}): \varepsilon v = \int_{\Omega} \overline{u}  \cdot v,   && \forall v \in Y \label{eq:stateqcontrol}\\
	    & \mu \int_{\Omega} \varepsilon p : \varepsilon v +\int_{\Omega} \lambda : \varepsilon v = \int_{\Omega} (\overline{y} -z_d) \cdot v, && \forall v \in Y  \label{eq:adjequation}\\
	    & \alpha \overline{u} + p =0, && \text{a.e. in } \Omega \label{eq:gradequation}\\
			& \lambda  =0, && \text{a.e. in } \{|\varepsilon \bar y| <g \}\\
	    & \lambda  = \left(\varepsilon p+ \frac{g}{\abs{\varepsilon \overline{y}}^3} \left( \varepsilon \overline{y} :\varepsilon p \right) \, \varepsilon \overline{y}
	    -g \frac{\varepsilon p }{|\varepsilon \overline{y}|} \right),
			&& \text{a.e. in} \, \{ |\varepsilon \overline{y}| > g \}.
	  \end{align}
	\end{subequations}
\end{theorem}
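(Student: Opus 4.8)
The plan is to pass to the limit as $\delta \to 0$ in the regularized optimality system \eqref{eq:statereg}--\eqref{eq:gradienteq}, using the convergences of Theorem \ref{th:convergenceofminimizers} and the uniform bounds of Lemma \ref{th:dualvarbounded}. Write $y_\delta := S_\delta(u_\delta)$, so that $u_\delta \to \overline u$ in $U$ and $y_\delta \to \overline y = S(\overline u)$ in $Y$, while $\|p_\delta\|_Y \le C_1$ and $\|\lambda_\delta\| \le C_2$. Since $Y$ is reflexive and $\Lmat$ is a Hilbert space, I first extract (non-relabeled) subsequences with $p_\delta \warrow p$ in $Y$ and $\nu\lambda_\delta \warrow \lambda$ in $\Lmat$; these will be the claimed adjoint state and multiplier. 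Passing moreover to a further subsequence, the strong convergence $\varepsilon y_\delta \to \varepsilon\overline y$ in $\Lmat$ yields $\varepsilon y_\delta \to \varepsilon\overline y$ a.e. in $\Omega$, which will be crucial below.

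The three ``equation'' relations are then straightforward. The state equation \eqref{eq:stateqcontrol} holds because $\overline y = S(\overline u)$ solves \eqref{eq:stateequation} by definition of $S$. For \eqref{eq:adjequation}, I fix $v \in Y$ and pass to the limit in \eqref{eq:adjointeq}: $\mu(\varepsilon p_\delta,\varepsilon v) \to \mu(\varepsilon p,\varepsilon v)$ and $\nu(\lambda_\delta,\varepsilon v) \to (\lambda,\varepsilon v)$ by weak convergence against the fixed test field $\varepsilon v$, and $(y_\delta - z_d, v) \to (\overline y - z_d, v)$ by the strong convergence $y_\delta \to \overline y$ in $\Lvec$. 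Finally, \eqref{eq:gradequation} is obtained by letting $\delta \to 0$ in \eqref{eq:gradienteq}: the right-hand side $(\alpha+1)u_\delta - \overline u \to \alpha\overline u$ strongly in $\Lvec$, whereas $-p_\delta \warrow -p$ weakly in $\Lvec$, so uniqueness of weak limits gives $\alpha\overline u + p = 0$ (and, incidentally, $p_\delta \to p$ strongly in $\Lvec$).

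The heart of the proof is the identification of $\lambda$ on the inactive and strongly active sets $A_< := \{\,|\varepsilon\overline y| < g\,\}$ and $A_> := \{\,|\varepsilon\overline y| > g\,\}$. The idea is to transfer the regularized derivative onto the test field: pointwise in $\Omega$ one has $\lambda_\delta : \Phi = ([\mathsf{m}'_\delta(\varepsilon y_\delta)]^*\varepsilon p_\delta):\Phi = \varepsilon p_\delta : (\mathsf{m}'_\delta(\varepsilon y_\delta)\Phi)$ for any $\Phi \in \Lmat$, whence $\nu(\lambda_\delta,\Phi) = \nu(\varepsilon p_\delta, \mathsf{m}'_\delta(\varepsilon y_\delta)\Phi)$. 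I claim that whenever $\Phi$ is supported in $\Omega\setminus A_=$, with $A_= := \{\,|\varepsilon\overline y| = g\,\}$, the coefficient converges strongly, $\mathsf{m}'_\delta(\varepsilon y_\delta)\Phi \to \mathsf{m}'(\varepsilon\overline y)\Phi$ in $\Lmat$. Indeed, for a.e. $x \notin A_=$ we have $\bigl||\varepsilon\overline y(x)| - g\bigr| =: 2\epsilon_x > 0$; the a.e. convergence of $\varepsilon y_\delta$ places $\varepsilon y_\delta(x)$ eventually inside the region $\{\,\bigl||E|-g\bigr| \ge \epsilon_x\,\}$, so the uniform convergence of $\mathsf{m}'_\delta$ to $\mathsf{m}'$ away from the sphere $\{|E|=g\}$ (Assumption \ref{ass:properties of regularizing function}) together with the continuity of $\mathsf{m}'$ on $\{|E|\neq g\}$ give $\mathsf{m}'_\delta(\varepsilon y_\delta(x)) \to \mathsf{m}'(\varepsilon\overline y(x))$ a.e. on $\Omega\setminus A_=$. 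The uniform bound $|\mathsf{m}'_\delta(E)H| \le C|H|$ from Assumption \ref{ass:properties of regularizing function} provides the dominating function $C|\Phi| \in \Lmat$, so the dominated convergence theorem yields the asserted strong convergence. Pairing the strong limit of the coefficient with the weak limit $\varepsilon p_\delta \warrow \varepsilon p$ gives $(\lambda,\Phi) = \nu(\varepsilon p, \mathsf{m}'(\varepsilon\overline y)\Phi) = \nu([\mathsf{m}'(\varepsilon\overline y)]^*\varepsilon p, \Phi)$ for all such $\Phi$, i.e. $\lambda = \nu[\mathsf{m}'(\varepsilon\overline y)]^*\varepsilon p$ a.e. on $\Omega\setminus A_=$. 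Taking $\Phi = \mathbbm{1}_{A_<}\varepsilon v$ and using $\mathsf{m}'(\varepsilon\overline y) = 0$ on $A_<$ (Case 1) gives the inactive-set relation; taking $\Phi = \mathbbm{1}_{A_>}\varepsilon v$ and using that $\mathsf{m}'(\varepsilon\overline y)$ is self-adjoint with the explicit Case 2 form reproduces the integrand $\varepsilon p + \tfrac{g}{|\varepsilon\overline y|^3}(\varepsilon\overline y : \varepsilon p)\,\varepsilon\overline y - g\tfrac{\varepsilon p}{|\varepsilon\overline y|}$ of the active-set relation (the constant $\nu$ being absorbed into $\lambda$).

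The main obstacle is precisely this identification. Because only \emph{weak} convergence of $p_\delta$ is available and $\mathsf{m}'_\delta \to \mathsf{m}'$ is \emph{not} uniform near the sphere $\{|E|=g\}$, one cannot simply pass to the limit in the nonlinear product $[\mathsf{m}'_\delta(\varepsilon y_\delta)]^*\varepsilon p_\delta$. The remedy is the weak--strong pairing above, which works only after the regularized derivative has been moved onto a test field supported away from $A_=$, where the a.e. convergence of $\mathsf{m}'_\delta(\varepsilon y_\delta)$ can be upgraded to $\Lmat$-convergence of the coefficient via domination. No relation for $\lambda$ is obtained on $A_=$, which is exactly the reason the resulting system is only \emph{weakly} stationary.
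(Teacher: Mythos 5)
Your proposal is correct and follows the same overall route as the paper: pass to the limit in the regularized optimality system \eqref{eq:statereg}--\eqref{eq:gradienteq} along the sequence of Theorem \ref{th:convergenceofminimizers}, extract weak limits \(p\) and \(\lambda\) from the bounds of Lemma \ref{th:dualvarbounded}, and identify \(\lambda\) on the inactive and strongly active sets via Assumption \ref{ass:properties of regularizing function}(7) and the pointwise convergence of \(\varepsilon y_\delta\). The difference is in how that identification is executed. The paper extracts a weak-star limit \(\chi\) of \(\mathsf{m}'_\delta(\varepsilon y_\delta)\) in \(L^\infty(\Omega;\mathbb{R}^{N\times N})\), invokes a Clarke-subdifferential result of Tiba to characterize \(\chi\), and then identifies \(\lambda\) on \(\{|\varepsilon \bar y|<g\}\) by appealing to ``uniqueness of the weak star limit'' --- a step which, taken literally, is delicate, because the product of a weak-star convergent sequence of coefficients with a merely weakly convergent sequence \(\varepsilon p_\delta\) need not converge to the product of the limits. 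Your transposition argument addresses exactly this point: by moving \(\mathsf{m}'_\delta(\varepsilon y_\delta)\) onto a fixed test field \(\Phi\) supported in \(\{|\varepsilon\bar y|\neq g\}\), upgrading the a.e.\ convergence \(\mathsf{m}'_\delta(\varepsilon y_\delta(x))\to\mathsf{m}'(\varepsilon\bar y(x))\) (uniform convergence away from the sphere plus continuity of \(\mathsf{m}'\) off the sphere) to strong \(\Lmat\)-convergence of \(\mathsf{m}'_\delta(\varepsilon y_\delta)\Phi\) by domination, and only then pairing strong with weak convergence, you obtain the inactive- and active-set relations rigorously and without any subdifferential machinery (which the paper's final system never actually uses). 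This is a cleaner execution of the same idea.

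One bookkeeping remark concerning the factor \(\nu\). With your normalization \(\lambda=\lim \nu\lambda_\delta\), equation \eqref{eq:adjequation} comes out exactly as stated, but the active-set identity you derive then carries a factor \(\nu\) on its right-hand side; the parenthetical ``the constant \(\nu\) being absorbed into \(\lambda\)'' cannot remove it, since \(\lambda\) has already been fixed. This mismatch is not a flaw of your argument but of the statement itself: the system \eqref{eq: weak system} is internally consistent only for \(\nu=1\) (compare with \eqref{eq:strongOS}, where the \(\nu\) instead multiplies the multiplier term in the adjoint equation and the active-set relation is \(\nu\)-free), so any correct proof must place the \(\nu\) in one of the two equations.
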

\begin{proof}
  Let \((u_\delta)_{\delta >0}\) be the sequence from Theorem \ref{th:convergenceofminimizers} and let \((y_\delta)_{\delta>0}\) be the corresponding sequence of states. Since \(Y\) and \(\Lmat\) are reflexive, by Lemma \ref{th:dualvarbounded}, we obtain the existence of subsequences of \((p_\delta)_{\delta>0}\) and
  \( (\lambda_{\delta})_{\delta>0} \), denoted by the same symbol, and limit points \(p \in Y\) and \(\lambda \in \Lmat \), such that
  \begin{align}
    & p_\delta \rightharpoonup p \quad \text{in } Y,\\
    & \varepsilon p_\delta \rightharpoonup \varepsilon p \quad \text{in } \Lvec, \label{eq:weak convergence epdelta}\\
    & \lambda_\delta \rightharpoonup \lambda \quad \text{in } \Lmat.
  \end{align}
  Passing to the limit in \eqref{eq:adjointeq}, we then obtain \eqref{eq:adjequation}.

 From Theorem \ref{th:convergenceofminimizers}, $\varepsilon y_\delta \to \varepsilon \bar y$ strongly in $\Lmat$, as $\delta \to 0,$ which implies existence of a pointwise convergent subsequence. On the set $\mathcal I:=\{ |\varepsilon y| \neq g \}$, we also get that $\partial_C \mathsf{m}(\varepsilon \overline y)$ is a singleton. Consequently, from Assumption \ref{ass:properties of regularizing function}(7) and the pointwise convergence
 $\varepsilon y_\delta(x) \to \varepsilon \bar y(x)$ a.e. in $\Omega$, that $\mathsf{m}_\delta' (\varepsilon y_\delta(x)) \to \mathsf{m}' (\varepsilon \overline{y}(x))$ pointwise
 a.e. in $\mathcal I$.

	Thanks to Assumption \ref{ass:properties of regularizing function}(6) we may apply Lebesque dominated convergence theorem and get that
	\begin{equation}
		\mathsf{m}_\delta' (\varepsilon y_\delta) \to \mathsf{m}' (\varepsilon \overline{y}) \quad \text{in }\mathbb L^2(\mathcal I).
	\end{equation}
	Together with \eqref{eq:weak convergence epdelta} we then obtain that
	\begin{equation}
		\mathsf{m}_\delta' (\varepsilon y_\delta) \varepsilon p_\delta \rightharpoonup \mathsf{m}' (\varepsilon \overline{y}) \varepsilon p \quad \text{in }\mathbb L^2(\mathcal I),
	\end{equation}
	hence $\lambda = \mathsf{m}' (\varepsilon \overline{y}) \varepsilon p$ a.e. in $\mathcal I$.


  From the convergence of the sequences \((p_\delta)_{\delta >0}\) and \((u_\delta)_{\delta >0}\) and the compact embedding \(Y \hookrightarrow \Lvec\), we deduce \eqref{eq:gradequation}.

\end{proof}

Since the optimality system given by \eqref{eq:stateqcontrol}-\eqref{eq:gradequation} does not contain any information about the sign of \(p\) or \(\lambda\), this system can be considered as a weak stationarity condition.


\section{Strong stationarity}
The following lemma will be used in the proof of the subsequent theorems.
\begin{lemma}\label{th:densedirderivative}
Let \((\overline{y}, \overline{u})\in Y \times U\) be a local optimal solution of problem \eqref{eq:controlproblem}. The set \(\left\{ S'(\overline{u};h): h \in U\right\}\) is dense in \(Y\).
\end{lemma}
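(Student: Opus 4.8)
The plan is to exhibit the directional–derivative map $h \mapsto S'(\overline u; h)$ as a composition $T^{-1} \circ \iota$ and then obtain density from continuity of $T^{-1}$ together with density of $\operatorname{ran}(\iota)$. Here $\iota \colon U \to Y'$ is the canonical embedding $\langle \iota h, v\rangle_{Y',Y} = \int_\Omega h \cdot v$, and $T \colon Y \to Y'$ is the operator from Theorem \ref{th:linearizedunique} associated with $\overline y = S(\overline u)$, i.e. $\langle T z, v\rangle_{Y',Y} = \mu(\varepsilon z, \varepsilon v) + \nu(\mathsf{m}'(\varepsilon \overline y; \varepsilon z), \varepsilon v)$. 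By Theorem \ref{th:solutionopdirdiff}, $z = S'(\overline u; h)$ is precisely the unique solution of $Tz = \iota h$, so that $\{\,S'(\overline u; h) : h \in U\,\} = T^{-1}(\iota(U))$. The two ingredients I would establish are: (i) $T^{-1}\colon Y' \to Y$ is a well-defined, globally Lipschitz map; and (ii) $\iota(U)$ is dense in $Y'$.

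For (i), Theorem \ref{th:linearizedunique} already shows that $T$ is a bijection of $Y$ onto $Y'$ (existence and uniqueness for every right-hand side, via Minty--Browder). The Lipschitz continuity of $T^{-1}$ follows exactly as in the Lipschitz estimate for $S$: for $l_1, l_2 \in Y'$ with $z_i = T^{-1} l_i$, I would subtract the two equations, test with $z_1 - z_2$, and invoke the coercivity bound \eqref{eq:monotonicityofT}, namely $\langle T z_1 - T z_2, z_1 - z_2\rangle_{Y',Y} \geq \mu C_K \norm{z_1 - z_2}_Y^2$, while bounding the right-hand side by $\norm{l_1 - l_2}_{Y'}\norm{z_1 - z_2}_Y$. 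This yields $\norm{T^{-1} l_1 - T^{-1} l_2}_Y \leq (\mu C_K)^{-1}\norm{l_1 - l_2}_{Y'}$; note that no linearity of $T$ is used here.

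For (ii), since $\iota$ is a bounded linear operator between reflexive spaces, its range is dense in $Y'$ if and only if its adjoint $\iota^* \colon Y \to \Lvec$ is injective. A direct computation gives, for $v \in Y$ and $h \in U$, $(\iota^* v, h)_{\Lvec} = \langle \iota h, v\rangle_{Y',Y} = (h,v)_{\Lvec}$, so that $\iota^*$ is nothing but the inclusion $Y \hookrightarrow \Lvec$, which is trivially injective. I regard this as the delicate point of the proof: one might naively expect to need $Y$ dense in $\Lvec$, which is \emph{false} for the solenoidal space $Y$, but density of $\iota(U)$ in $Y'$ requires only injectivity of the embedding $Y \hookrightarrow \Lvec$.

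Combining the two ingredients finishes the argument. Given an arbitrary $z \in Y$, set $l = Tz \in Y'$; by (ii) choose $l_n \in \iota(U)$ with $l_n \to l$ in $Y'$, and by the Lipschitz continuity from (i), $T^{-1} l_n \to T^{-1} l = z$ in $Y$, with each $T^{-1} l_n \in T^{-1}(\iota(U)) = \{\,S'(\overline u; h) : h \in U\,\}$. Since $z$ was arbitrary, the set is dense in $Y$. The only genuine obstacle is the density claim (ii); the remaining steps are bookkeeping around the already-established well-posedness and monotonicity of the linearized operator, using crucially that the \emph{linear} embedding $\iota$ has dense range and that $T$ is a Lipschitz homeomorphism.
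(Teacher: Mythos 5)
Your proof is correct and follows essentially the same route as the paper: both identify the set $\{S'(\overline u;h):h\in U\}$ with $T^{-1}(\iota(U))$ for the monotone linearized operator $T$ of Theorem \ref{th:linearizedunique}, approximate $T\xi$ in $Y'$ by elements $\iota h_n$ with $h_n \in U$, and conclude via the Lipschitz stability $\norm{T^{-1}l_1-T^{-1}l_2}_Y\leq(\mu C_K)^{-1}\norm{l_1-l_2}_{Y'}$ furnished by the coercivity estimate \eqref{eq:monotonicityofT}. The only (minor) difference is your justification of the density of $\iota(U)$ in $Y'$ through injectivity of the adjoint $\iota^*$, i.e.\ of the inclusion $Y\hookrightarrow\Lvec$, whereas the paper invokes the dense embedding of $L^2$ into $H^{-1}$; both are valid, and your remark that no density of $Y$ in $\Lvec$ is needed is a nice clarification.
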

\begin{proof}
  Let \(\xi \in Y\) be arbitrary. We will show that there exists a sequence \((h_n)_{n \in N}\) in \(U\) such that
  \begin{equation*}
    S'(\overline{u};h_n)\rightarrow \xi \quad \text{in }Y.
  \end{equation*}
  Let \(\hat h \in Y'\) be the functional corresponding to the evaluation of the ``linearized'' equation   \eqref{eq:linearizedeq} at \(\xi\), i.e.,
  \begin{equation} \label{eq:right hand side of linearizedeq}
    \langle \hat h, v \rangle_{Y',Y}: = \mu \int_{\Omega} \varepsilon \xi : \varepsilon v +\nu\int_{\Omega}   \mathsf{m}'\left( \varepsilon \overline{y};\varepsilon \xi \right)   : \varepsilon v, \quad \forall v \in Y.
  \end{equation}
  As $Y \hookrightarrow H \hookrightarrow Y'$ densely, with $H:=\{ \operatorname{curl} \varphi: \varphi \in \Hvec, ~ \operatorname{curl} \varphi \cdot \vec n |_\gamma =0 \}$ (see, e.g., \cite[Chapter 3]{giraultraviart}), there exists a sequence \((h_n)_{n \in N} \subset U\) such that $h_n \to \hat h$ in $Y'$, as $n \to \infty$.

  Taking the difference between \eqref{eq:right hand side of linearizedeq} and the ``linearized'' equation satisfied by each $S'(\overline{u};h_n)$, and owing to the monotonicity of the ``linearized'' operator (see Theorem \ref{th:linearizedunique}), we obtain that
  \begin{equation*}
    \|S'(\overline{u};h_n)- \xi \|_Y \leq C \|h_n - \hat h\|_{Y'},
  \end{equation*}
  which implies the result.
\end{proof}

\begin{theorem}\label{th:strongstationarity}
 Let \((\overline{y}, \overline{u})\in Y \times U\) a local optimal solution of \eqref{eq:controlproblem}. Then there exist a unique adjoint state \(p\in Y\) and a multiplier \(\lambda \in \Lmat \) such that
 \begin{subequations} \label{eq:strongOS}
   \begin{align}
    & \mu (\varepsilon p, \varepsilon v) + \nu \int_{\Omega} \lambda :  \varepsilon v = (\overline{y}-z_d , v), &&  \forall v \in Y, \label{eq:stronglinearizedy} \\
    & p+\alpha \overline{u} =0, && \text{a.e. in } \Omega.\label{eq:stronglinearizedu}\\
		& \lambda  =0, && \text{a.e. in } \{|\varepsilon \bar y| <g \} \label{eq:stronglinearizedlambda1}\\
		& \lambda  = \left(\varepsilon p+ \frac{g}{\abs{\varepsilon \overline{y}}^3} \left( \varepsilon \overline{y} :\varepsilon p\right) \, \varepsilon \overline{y}
		-g\dfrac{\varepsilon p }{\abs{\varepsilon \overline{y} }} \right), && \text{a.e. in }\{|\varepsilon \bar y| >g \},\label{eq:stronglinearizedlambda2}\\
		& \int_{\mathcal A} \lambda :  \varepsilon v \geq \int_{\mathcal A} \frac{1}{g^2} \max(0,(\varepsilon \overline{y}: \varepsilon v)) \varepsilon \overline{y}:\varepsilon p,
		&& \forall v \in Y, \label{eq:stronglinearizedlambda3}
  \end{align}
\end{subequations}
where $\mathcal A := \{x \in \Omega: \abs{\varepsilon \overline{y}(x)} = g\}$.
\end{theorem}

\begin{proof}
  Let \(p \in Y\) and \(\lambda \in \Lmat \) satisfy \eqref{eq: weak system} and let \(h \in U\) be arbitrary. Equations \eqref{eq:stronglinearizedlambda1} and \eqref{eq:stronglinearizedlambda2} follow directly from the weak optimality system \eqref{eq: weak system}. Taking \(v=S'(\overline{u},h) \in Y\) in \eqref{eq:adjequation}, we obtain
  \begin{multline}\label{eq:JyinderivativeofS}
    \mu \int_{\Omega} \varepsilon p : \varepsilon S'(\overline{u},h) +\nu \int_{\Omega} \lambda : \varepsilon S'(\overline{u},h) \\ = \int_{\Omega} (\overline{y} -z_d) \cdot S'(\overline{u},h) = J_y (\overline{y}, \overline{u})S'(\overline{u},h).
  \end{multline}
  On the other hand, taking \(v=p\) in equation \eqref{eq:linearizedeq}, we have
  \begin{equation}\label{eq:linearizedtestp}
  \mu \int_{\Omega} \varepsilon S'(\overline{u},h) : \varepsilon p +\nu\int_{\Omega}   \mathsf{m}'\left( \varepsilon \overline{y};\varepsilon S'(\overline{u},h) \right)   : \varepsilon p = \int_{\Omega} h \cdot p.
  \end{equation}
  From Proposition \ref{th:optimalityafterlimit}, we know that $p=- \alpha \overline{u}, \text{ a.e. in }\Omega.$
  Replacing this in \eqref{eq:linearizedtestp}, we obtain
  \begin{equation}\label{eq:Ju}
  \mu \int_{\Omega} \varepsilon S'(\overline{u},h) : \varepsilon p +\nu \int_{\Omega} \mathsf{m}'\left( \varepsilon \overline{y};\varepsilon S'(\overline{u},h) \right)   : \varepsilon p = -\alpha \int_{\Omega} h \cdot \overline{u} = - J_u (\overline{y}, \overline{u})h.
  \end{equation}
  Now, from the primal optimality condition \eqref{eq:Bstationarity}, we have \(J_y (\overline{y}, \overline{u})S'(\overline{u},h)\geq - J_u (\overline{y}, \overline{u})h  \), for every \(h \in U\), which, thanks to \eqref{eq:JyinderivativeofS} and \eqref{eq:Ju}, implies
  \begin{multline*}
  \mu \int_{\Omega} \varepsilon p : \varepsilon S'(\overline{u},h) +\nu \int_\Omega \lambda :  \varepsilon S'(\overline{u},h) \geq \mu \int_{\Omega} \varepsilon S'(\overline{u},h) : \varepsilon p \\+\nu \int_{\Omega} \mathsf{m}'\left( \varepsilon \overline{y};\varepsilon S'(\overline{u},h) \right)   : \varepsilon p \quad \forall h \in U.
  \end{multline*}
  This equation is equivalent to
  \begin{equation}\label{eq:ineqforlambdaandp}
  \nu \int_{\Omega} \lambda :  \varepsilon S'(\overline{u},h) \geq \nu \int_{\Omega} \mathsf{m}'\left( \varepsilon \overline{y};\varepsilon S'(\overline{u},h) \right)   : \varepsilon p, \quad \forall h \in U.
  \end{equation}

  Let now \(\xi \in Y\) be arbitrary but fixed. By Lemma \ref{th:densedirderivative}, there exists a sequence \((h_n )_{n\in \mathbb{N}} \) in \(U\) such that \(S'(\overline{u},h_n) \rightarrow \xi\) in \(Y\), as \(n \rightarrow \infty\). We have that \eqref{eq:ineqforlambdaandp} holds for \(h=h_n\), i.e.,
  \begin{equation}\label{eq:ineqforlambdap}
    \nu \int_{\Omega} \lambda : \varepsilon S'(\overline{u},h_n) \geq \nu \int_{\Omega} \mathsf{m}'\left( \varepsilon \overline{y};\varepsilon S'(\overline{u},h_n) \right)   : \varepsilon p \, dx,  \quad \forall n \in \mathbb{N}.
  \end{equation}
  Since the directional derivative of \(\mathsf{m}\) is Lipschitz continuous with respect to the direction and the operator \(\varepsilon\) is continuous, passing to the limit as \(n \rightarrow \infty\) in \eqref{eq:ineqforlambdap} yields
  \begin{equation} \label{eq:lambdaxip_ineq}
    \int_{\Omega} \lambda :  \varepsilon \xi \geq  \int_{\Omega} \mathsf{m}'\left( \varepsilon \overline{y};\varepsilon \xi \right)   : \varepsilon p \, dx ,\quad \forall \xi \in Y.
  \end{equation}
  Utilizing the directional derivative of \(\mathsf{m}\), \eqref{eq:lambdaxip_ineq} can be written as
  \begin{multline}\label{eq:ineqthreesets}
    \int_{\Omega} \lambda :  \varepsilon \xi \geq \int_{\{\abs{\varepsilon \overline{y}} > g\}} \left[ \varepsilon \xi+g \left( \dfrac{\varepsilon \overline{y}}{\abs{\varepsilon \overline{y}}^3} :\varepsilon \xi \right) \varepsilon \overline{y} -g\dfrac{\varepsilon \xi }{\abs{\varepsilon \overline{y} }} \right]: \varepsilon p\\
  + \int_{\{\abs{\varepsilon \overline{y}} = g\}} \frac{1}{g^2} \max(0,(\varepsilon \overline{y}: \varepsilon \xi)) \varepsilon \overline{y}:\varepsilon p, \quad \forall \xi \in Y.
\end{multline}
  Thanks to \eqref{eq:stronglinearizedlambda1} and \eqref{eq:stronglinearizedlambda2}, we thus obtain that
  \begin{equation*}
    \int_{\mathcal A} \lambda :  \varepsilon \xi \geq \int_{\mathcal A} \frac{1}{g^2} \max(0,(\varepsilon \overline{y}: \varepsilon \xi)) \varepsilon \overline{y}:\varepsilon p, \quad \forall \xi \in Y.
  \end{equation*}
\end{proof}

The result of Theorem \ref{th:strongstationarity} is stronger than the weak stationarity result in the sense that condition \eqref{eq:stronglinearizedlambda3}, which can be considered as a condition on the sign of \(\lambda\), is not present in the optimality system \eqref{eq: weak system}.
Moreover, Theorem \ref{th:strongstationarity} is equivalent to the Bouligand stationarity, which will be showed in the following proposition.

\begin{theorem}
  Let \((\overline{y}, \overline{u})\in Y \times U\), \(p\in Y\) and \(\lambda \in \Lmat \) satisfy \eqref{eq:strongOS}. Then, it also satisfies the optimality condition \eqref{eq:Bstationarity}.
\end{theorem}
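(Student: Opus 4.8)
The plan is to verify the primal inequality \eqref{eq:Bstationarity} directly, by inserting the strong stationarity system \eqref{eq:strongOS} into the directional derivative of the reduced cost and reducing everything to a pointwise estimate on the biactive set. Fix $h\in U$ and write $\delta:=S'(\overline{u};h)\in Y$, which solves the linearized equation \eqref{eq:linearized}. Since $J_y(\overline{y},\overline{u})\delta=(\overline{y}-z_d,\delta)$ and $J_u(\overline{y},\overline{u})h=\alpha(\overline{u},h)$, establishing \eqref{eq:Bstationarity} amounts to proving $(\overline{y}-z_d,\delta)+\alpha(\overline{u},h)\ge 0$.

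First I would build a duality identity. Testing the adjoint equation \eqref{eq:stronglinearizedy} with $v=\delta$ gives
\[
  (\overline{y}-z_d,\delta)=\mu(\varepsilon p,\varepsilon\delta)+\nu\int_\Omega \lambda:\varepsilon\delta .
\]
Testing the linearized equation \eqref{eq:linearized} with $v=p$ and using the gradient equation \eqref{eq:stronglinearizedu} in the form $\alpha\overline{u}=-p$, I obtain
\[
  \alpha(\overline{u},h)=-(p,h)=-\mu(\varepsilon\delta,\varepsilon p)-\nu\int_\Omega \mathsf{m}'(\varepsilon\overline{y};\varepsilon\delta):\varepsilon p .
\]
Adding the two identities, the symmetric $\mu$-terms cancel and I am left with
\[
  J_y(\overline{y},\overline{u})\delta+J_u(\overline{y},\overline{u})h=\nu\int_\Omega \bigl(\lambda:\varepsilon\delta-\mathsf{m}'(\varepsilon\overline{y};\varepsilon\delta):\varepsilon p\bigr),
\]
so it suffices to show that the right-hand integral is nonnegative.

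Next I would localize this integral over the three sets $\{\abs{\varepsilon\overline{y}}<g\}$, $\{\abs{\varepsilon\overline{y}}>g\}$ and $\mathcal{A}=\{\abs{\varepsilon\overline{y}}=g\}$. On $\{\abs{\varepsilon\overline{y}}<g\}$ the second integrand vanishes because $\mathsf{m}'(\varepsilon\overline{y};\cdot)=0$, while the first vanishes by \eqref{eq:stronglinearizedlambda1} tested with $\delta$. On $\{\abs{\varepsilon\overline{y}}>g\}$ I would insert the Case~2 expression for $\mathsf{m}'(\varepsilon\overline{y};\varepsilon\delta)$; by the symmetry of the Frobenius product the integrand $\mathsf{m}'(\varepsilon\overline{y};\varepsilon\delta):\varepsilon p$ coincides term by term with the one on the right-hand side of \eqref{eq:stronglinearizedlambda2}, so testing \eqref{eq:stronglinearizedlambda2} with $v=\delta$ cancels this contribution as well. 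Only the biactive set survives, and inserting the Case~3 identity $\mathsf{m}'(\varepsilon\overline{y};\varepsilon\delta)=\tfrac{1}{g^2}\max(0,\varepsilon\overline{y}:\varepsilon\delta)\,\varepsilon\overline{y}$ valid on $\mathcal{A}$ yields
\[
  J_y(\overline{y},\overline{u})\delta+J_u(\overline{y},\overline{u})h=\nu\int_{\mathcal{A}}\Bigl(\lambda:\varepsilon\delta-\tfrac{1}{g^2}\max(0,\varepsilon\overline{y}:\varepsilon\delta)\,(\varepsilon\overline{y}:\varepsilon p)\Bigr).
\]

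The main obstacle is precisely this biactive-set integral, and here the sign condition \eqref{eq: pointwise strong stationarity} must do all the work. The difficulty is that $\lambda:\varepsilon\overline{y}\le 0$ alone does not control the pairing $\lambda:\varepsilon\delta$, since $\lambda$ could a priori carry a component transverse to $\varepsilon\overline{y}$ that is paired adversely with $\varepsilon\delta$. My plan is to rule this out by strengthening, on $\mathcal{A}$, the identification $\lambda=\chi\,\varepsilon p$ with $\chi\in\partial_C\mathsf{m}(\varepsilon\overline{y})$ that Theorem~\ref{th:optimalityafterlimit} already provides away from the biactive set: on $\mathcal{A}$ one has $\partial_C\mathsf{m}(\varepsilon\overline{y})=\{\,t\,\varepsilon\overline{y}\otimes\varepsilon\overline{y}/g^2:t\in[0,1]\,\}$, which forces $\lambda=\tfrac{t}{g^2}(\varepsilon\overline{y}:\varepsilon p)\,\varepsilon\overline{y}$ to be collinear with $\varepsilon\overline{y}$. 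Granting this, the sign condition reads $t\,(\varepsilon\overline{y}:\varepsilon p)\le 0$ and hence $\varepsilon\overline{y}:\varepsilon p\le 0$ where $t>0$; splitting the integrand according to the sign of $\varepsilon\overline{y}:\varepsilon\delta$, the case $\varepsilon\overline{y}:\varepsilon\delta<0$ is nonnegative immediately (the max term drops and $\lambda:\varepsilon\delta\ge0$), while the case $\varepsilon\overline{y}:\varepsilon\delta\ge0$ reduces to $(t-1)(\varepsilon\overline{y}:\varepsilon p)\ge0$, which holds since $t\le1$ and $\varepsilon\overline{y}:\varepsilon p\le0$. Thus the integrand is nonnegative a.e.\ on $\mathcal{A}$, giving $j'(\overline{u};h)\ge0$ for every $h\in U$, i.e.\ \eqref{eq:Bstationarity}. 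I expect the rigorous justification of the collinearity of $\lambda$ with $\varepsilon\overline{y}$ on $\mathcal{A}$ (and the attendant sign of $\varepsilon\overline{y}:\varepsilon p$) to be the genuinely delicate step, as it requires passing to the limit in the product $\mathsf{m}'_\delta(\varepsilon y_\delta)\,\varepsilon p_\delta$ precisely on the set where $\mathsf{m}'_\delta$ fails to converge uniformly.
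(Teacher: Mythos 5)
Your opening reduction is precisely the paper's own argument: with $z:=S'(\overline u;h)$ (your $\delta$), test \eqref{eq:stronglinearizedy} with $z$, test \eqref{eq:linearized} with $p$, use \eqref{eq:stronglinearizedu} to cancel the $\mu$-terms, and then dispose of $\{\abs{\varepsilon\overline y}<g\}$ and $\{\abs{\varepsilon\overline y}>g\}$ through \eqref{eq:stronglinearizedlambda1}, \eqref{eq:stronglinearizedlambda2} and the symmetry of the Case-2 operator; this is exactly how the paper reduces \eqref{eq:Bstationarity} to the inequality \eqref{eq:lambdaxip_ineq}. The genuine gap is in your last step, on $\mathcal A$. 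Your mechanism for controlling $\lambda:\varepsilon z$ there is the identification $\lambda=\chi\,\varepsilon p$ with $\chi\in\partial_C\mathsf{m}(\varepsilon\overline y)$, hence the collinearity $\lambda=\frac{t}{g^2}(\varepsilon\overline y:\varepsilon p)\,\varepsilon\overline y$ with $t\in[0,1]$. But this is not among the hypotheses: the theorem is an implication between the system \eqref{eq:strongOS} and the condition \eqref{eq:Bstationarity} for an \emph{arbitrary} tuple $(\overline y,\overline u,p,\lambda)$ satisfying \eqref{eq:strongOS}; nothing says $\lambda$ arises as a limit of regularized multipliers. Moreover, even for multipliers that do come from the regularization, Theorem \ref{th:optimalityafterlimit} identifies $\lambda$ only off the biactive set (the weak-$*$ argument there relies on the uniform convergence of Assumption~\ref{ass:properties of regularizing function}(7), which is unavailable on $\mathcal A$); the passage to the limit in the product $[\mathsf{m}_\delta'(\varepsilon y_\delta)]^*\varepsilon p_\delta$ on $\mathcal A$ --- which you yourself flag as the delicate step --- is nowhere proved. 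Your argument therefore rests on a lemma that is both unproved and, as a hypothesis, not available in this theorem.

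Second, even granting the collinearity, your case analysis is incomplete. The sign condition \eqref{eq: pointwise strong stationarity} gives $t\,(\varepsilon\overline y:\varepsilon p)\le 0$, hence $\varepsilon\overline y:\varepsilon p\le0$ only where $t>0$. At points of $\mathcal A$ where $t=0$ one has $\lambda=0$ and no information on the sign of $\varepsilon\overline y:\varepsilon p$; there the integrand reduces to $-\frac{1}{g^2}\max(0,\varepsilon\overline y:\varepsilon z)\,(\varepsilon\overline y:\varepsilon p)$, which is negative wherever $\varepsilon\overline y:\varepsilon p>0$ and $\varepsilon\overline y:\varepsilon z>0$, so the claimed pointwise nonnegativity fails. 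For comparison, the paper closes this proof quite differently and much more tersely: it asserts that \eqref{eq:stronglinearizedlambda1}--\eqref{eq: pointwise strong stationarity} yield \eqref{eq:lambdaxip_ineq} directly, i.e.\ it treats the single condition $\lambda:\varepsilon\overline y\le 0$ as sufficient on $\mathcal A$, invoking neither collinearity of $\lambda$ nor a sign for $\varepsilon\overline y:\varepsilon p$. Your observation that $\lambda:\varepsilon\overline y\le 0$ alone cannot control $\lambda:\varepsilon z$ when $\lambda$ has a component transverse to $\varepsilon\overline y$ is well taken --- it points exactly at the step the paper leaves implicit --- but the route you take to repair it neither stays within the stated hypotheses nor closes the estimate.
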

\begin{proof}
  Let \(h \in U\) be arbitrary. Let us set \(z=S'(\overline{u};h)\). Testing  \eqref{eq:stronglinearizedu}, with \(h\), we obtain
  \begin{equation*}
    -\alpha \left( \overline{u},h \right)=(p,h), \quad \forall h \in U.
  \end{equation*}
  Using this on the right hand side of equation \eqref{eq:linearizedeq}, we obtain
  \begin{equation*}
    -\alpha \left( \overline{u},h \right) = \mu \int_{\Omega} \varepsilon z : \varepsilon p \, dx +\nu \int_{\Omega} \mathsf{m}'\left( \varepsilon \overline{y};\varepsilon z \right)   : \varepsilon p \, dx.
  \end{equation*}
  Combining with \eqref{eq:stronglinearizedy} we then obtain
  \begin{equation*}
    -\alpha \int_\Omega \overline{u}\cdot h \, dx =- \int_{\Omega} \lambda :  \varepsilon z + \int_\Omega (\overline{y}-z_d)\cdot z \, dx +\nu\int_{\Omega}   \mathsf{m}'\left( \varepsilon \overline{y};\varepsilon z \right): \varepsilon p \, dx.
  \end{equation*}
  Utilizing \eqref{eq:stronglinearizedlambda1}-\eqref{eq:stronglinearizedlambda3} it then follows that
  \begin{equation*}
    \int_{\Omega} \lambda :  \varepsilon \xi \geq  \int_{\Omega} \mathsf{m}'\left( \varepsilon \overline{y};\varepsilon \xi \right)   : \varepsilon p \, dx ,\quad \forall \xi \in Y.
  \end{equation*}
  and, consequently,
  \begin{equation*}
    -\alpha \int_\Omega \overline{u}\cdot h \, dx \leq \int_\Omega (\overline{y}-z_d)\cdot z \, dx,
  \end{equation*}
  which corresponds to the optimality condition \eqref{eq:Bstationarity}.
\end{proof}


\begin{thebibliography}{10}
\bibitem{arada}
Nadir Arada. Optimal control of evolutionary quasi-Newtonian fluids.
\newblock{\em SIAM Journal on Control and Optimization}, 52(5), 3401-3436, 2014.

\bibitem{casas2002error}
Eduardo Casas, Fredi Tröltzsch. Error estimates for the finite-element approximation
of a semilinear elliptic control problem. \newblock{\em Control and Cybernetics},
31(3):695–712, 2002.

\bibitem{clason2021}
Christian Clason, Vu Huu Nhu, Arnd Rösch. Optimal
control of a non-smooth quasimilinear elliptic equation.
\newblock{\em Mathematical Control \& Related Fields}, 11(3):521-554, 2021.

\bibitem{christof2018}
Constantin Christof, Christian Meyer, Stephan Walther, Christian Clason. Optimal
control of a non-smooth semilinear elliptic equation.
\newblock{\em Mathematical Control \& Related Fields}, 8(1):247-276, 2018.

\bibitem{Ciarlet}
Philippe G. Ciarlet.
\newblock{\em  Linear and Nonlinear Functional Analysis with Applications}.
SIAM, 2013.

\bibitem{delosreyesbook}
Juan Carlos De los Reyes.
\newblock{\em Numerical PDE-Constrained Optimization}.
Springer, Cham 2015.

\bibitem{de2014nonsmooth}
Juan Carlos De los Reyes, Georg Stadler. A nonsmooth model for discontinuous
shear thickening fluids: Analysis and numerical solution.
\newblock{Interfaces and
Free Boundaries}, 16(4):575–602, 2014.

\bibitem{giraultraviart}
Vivette Girault and Pierre-Arnaud Raviart.
\newblock{\em Finite element approximation of the Navier-Stokes equations}. Springer Berlin, 1979.

\bibitem{goldberg1992nemytskij}
Helmuth Goldberg, Winfried Kampowsky, and Fredi Tröltzsch. On Nemytskij
operators in Lp-spaces of abstract functions.
\newblock{\em Mathematische Nachrichten},
155(1):127–140, 1992.

\bibitem{guerra}
Telma Guerra, Jorge Tiago, Adélia Sequeira. On the optimal control of a class of non-Newtonian fluids.
\newblock{\em Ann Univ Ferrara}, 60, 133-147, 2014

\bibitem{herzogmeyer2013}
Roland Herzog, Christian Meyer, and Gerd Wachsmuth. B-and strong stationarity for optimal control of static plasticity with hardening.
\newblock{\em SIAM Journal on
Optimization}, 23(1):321–352, 2013.

\bibitem{luopang1996}
Zhi-Quan Luo, Jong-Shi Pang, and Daniel Ralph.
\newblock{\em Mathematical Programs with Equilibrium
Constraints}.
Cambridge University Press, 1996.

\bibitem{meyersusu2017}
Christian Meyer and Livia M. Susu. Optimal control of nonsmooth, semilinear
parabolic equations.
\newblock{\em  SIAM Journal on Control and Optimization}, 55(4):2206–2234,
2017.

\bibitem{mignot1984optimal}
F. Mignot and J.P. Puel. Optimal control in some variational inequalities.
\newblock{\em SIAM
Journal on Control and Optimization}, 22(3):466–476, 1984.


\bibitem{slawig}
Thomas Slawig. Distributed control for a class of non-Newtonian fluids.
\newblock{\em Journal of Differential Equations}, 219(1), 116-143, 2005.

\bibitem{tiba}
Dan Tiba.
\newblock{\em Optimal Control of Nonsmooth Distributed Parameter Systems}.
Springer Verlag, 1990.

\bibitem{troeltzsch}
Fredi Tr\"oltzsch.
\newblock{\em Optimal Control of Partial Differential Equations: Theory, Methods and Applications}.
American Mathematical Society, 2010.

\bibitem{wachsmuth}
Daniel Wachsmuth, Tomáš Roubíček. Optimal control of planar flow of incompressible non-Newtonian fluids.
\newblock{\em Z. Anal. Anwend.} 29(3), 351–376, 2010.

\end{thebibliography}
\end{document}